\definecolor{orange}{rgb}{.9,.6,.2}
\definecolor{violet}{rgb}{.5,0,.5}
\definecolor{dg}{rgb}{0,0.67,0}
\definecolor{cof}{RGB}{219,144,71}
\definecolor{pur}{RGB}{186,146,162}
\definecolor{greeo}{RGB}{91,173,69}
\definecolor{greet}{RGB}{52,111,72}
\def\QQ{{\mathbb Q}}
\def\ZZ{{\mathbb Z}}
\renewcommand{\bar}{\overline}
\newcommand{\nc}{\newcommand}
\nc{\noi}{\noindent}
\nc{\cmmt}[1]{}
\newcounter{projectcnt}
\nc{\sctn}[1]{{\bigskip\addtocounter{projectcnt}{1}%
\begin{center}{\textbf{Project \theprojectcnt. #1}}\end{center}\medskip}}
\nc{\plainsctn}[1]{{\bigskip\begin{center}{\textbf{#1}}\end{center}\medskip}}
\nc{\dave}[1]{\begin{quote}\em #1\end{quote}}
\nc{\bbP}{{\bf{P}}}
\nc{\N}{{\bf{N}}}
\nc{\bF}{{\bar{\bf{F}}}}
\nc{\Gdave}{{\bf{G}}}
\nc{\I}{{\bf{I}}}
\nc{\cM}{{\cal M}}
\nc{\cK}{{\mathcal K}}
\nc{\cA}{{\cal A}}
\nc{\cO}{{\mathcal O}}
\nc{\cS}{{\mathcal S}}
\nc{\cNF}{{\cal NF}}
\nc{\cMF}{{\cal MF}}
\nc{\cLS}{{\cal LS}}
\nc{\Aff}{\mbox{Aff}}
\nc{\Mod}{{\mbox{\textup{Mod}}}}
\nc{\T}{\mathbb{T}}
\nc{\nf}{S_k^{\textup{\tiny new}}(N, \chi)}
\nc{\of}{S_k^{\textup{\tiny old}}(N, \chi)}
\nc{\mf}{S_k(N, \chi)}
\nc{\Skip}{\mathrm{\rm Skip}}
\nc{\Eis}{\mathrm{Eis}}
\nc{\Poly}{\mathrm{\rm Poly}}
\nc{\Polys}{\mathrm{\rm Polys}}
\nc{\Ext}{\mathrm{\rm Ext}}
\nc{\Ore}{\mathrm{\rm Ore}}
\nc{\Step}{\mathrm{\rm Step}}
\nc{\Mass}{\mathrm{\rm Mass}}
\nc{\mass}{\mathrm{\rm mass}}
\nc{\Vol}{\mathrm{\rm Vol}}
\def\sump{\operatornamewithlimits{\sum\raise7pt\hbox{$\prime$}}}
\DeclareMathOperator{\diag}{diag}
\def\sumpp{\operatornamewithlimits{\sum\raise9pt\hbox{\kern-2pt\scriptsize$(p)$\kern-11pt}\kern6pt}}
\DeclareMathOperator{\tr}{tr}
\newcommand{\Q}{{\mathbb Q}}
\newcommand{\Z}{{\mathbb Z}}
\renewcommand{\P}{{\mathbb P}}
\newtheorem{theorem}{Theorem}[section]
\newtheorem{corollary}[theorem]{Corollary}
\newtheorem{proposition}[theorem]{Proposition}
\newcommand{\litem}{\par\noindent\dimen0=\parindent%
    \advance\dimen0 by-4pt
               \hangindent=\dimen0\ltextindent}
\newcommand{\ltextindent}[1]{\hbox to \hangindent{#1\hss}\ignorespaces}
\newcommand{\ltextjndent}[1]{\hbox to \hangindent{#1\hss}\ignorespaces\kern-1ex}
\newtheorem{remark}[theorem]{Remark}
\begin{document}

\title{Independence Polynomials and Hypergeometric Series}

\author{Danylo~Radchenko}
\address{ETH Zurich, Mathematics Department, Z\"urich 8092, Switzerland}
\email{danradchenko@gmail.com}

\author{Fernando~Rodriguez~Villegas}
\address{The Abdus Salam International Centre for Theoretical Physics\\
Strada Costiera 11\\
Trieste 34151, Italy}
\email{villegas@ictp.it}

\maketitle

\begin{abstract}
  Let $\Gamma$
  be a simple graph and $I_\Gamma(x)$
  its multivariate independence polynomial. The main result of this
  paper is the characterization of chordal graphs as the only $\Gamma$
  for which the power series expansion of $I_\Gamma^{-1}(x)$
  is Horn hypergeometric.
\end{abstract}

\section{}
In this paper by a {\it graph} we will mean a {\it simple graph};
i.e. a usual graph with no multiple edges or loops. Let $\Gamma$ be a
graph on $n$ vertices. We label the vertices of $\Gamma$ and attach to
the $i$-th vertex an independent variable $x_i$. The {\it independence
  polynomial}~\cite[Ch.6]{barnikov} of $\Gamma$ is a polynomial in the
variables $x=(x_1,\ldots, x_n)$ defined as follows.
\begin{equation}
\label{defn-indep-pol}
I_\Gamma(x)=\sum_Ix^I,
\end{equation}
where $I\subseteq \{1,\ldots,n\}$ runs over the independent sets of
vertices of $\Gamma$ and 
$$
x^I:=\prod_{i\in I}  x_i\,.
$$

An {\it independent} set $I\subseteq \{1,\ldots,n\}$ is a subset of
vertices of $\Gamma$ such  that no pair of elements of~$I$ are
connected by  an edge  in $\Gamma$. Note that $I_\Gamma$ has constant
term $1$ for every graph $\Gamma$.

The independence polynomial plays a role in statistical mechanics: it
is the partition function of a lattice gas in the hard-core case; its
vanishing locus is also important because of its connection to the
Lov\'asz local lemma in probability theory (see~\cite{sokal}).

For example, if $\Gamma:=L_n$ is the line graph 
\begin{center}
\begin{tikzpicture}
    \draw (-1,0) node[anchor=east]{$L_n$};

    \draw[fill=black] ( 0, 0) circle (.07);
    \draw[fill=black] ( 1, 0) circle (.07);
    \draw[fill=black] ( 3, 0) circle (.07);
    \draw[fill=black] ( 4, 0) circle (.07);

    \draw[thick] (0,0) -- (1,0);
    \draw[dashed] (1,0) -- (1.5,0);
    \draw[dashed] (2.5,0) -- (3,0);
   \draw[thick] (3,0) -- (4,0);
\end{tikzpicture}
\end{center}
then $I_{\Gamma}(x)=\sum_I x^I$,  where $I\subseteq \{1,\ldots,n\}$
runs over the subsets containing no consecutive numbers $i,i+1$ for
$i=1,\ldots, n-1$. The first few values of $I_{L_n}$ are
\begin{align*}
I_{L_1}&= 1+x_1\,,\\
I_{L_2}&= 1+x_1+x_2\,,\\
I_{L_3}&= 1+x_1+x_2+x_3+x_1x_3\,,\\
I_{L_4}&= 1+x_1+x_2+x_3+x_4+x_1x_3+x_2x_4+x_1x_4\,.
\end{align*}

These polynomials are in fact, up to re-indexing, the multivariate
Fibonacci  polynomials defined by the recursion
\begin{equation}
\label{fibonacci-defn}
F_n=F_{n-1}+x_{n-2}F_{n-2}\,,  \qquad   n >1,\qquad F_0=0, \quad F_1=1.
\end{equation}
We have $I_{L_n}=F_{n+2}$.

A graph $\Gamma$ is called {\it chordal} if it has no induced subgraph
isomorphic to the cycle graph~$C_n$ 
with $n\geq 4$~\cite[Ch.4, \S1]{golumbic}. 
By {\it induced subgraph} defined by a subset $J$ of vertices 
of~$\Gamma$ we mean the subgraph  $\Gamma(J)\subseteq \Gamma$ 
obtained by deleting from~$\Gamma$ the vertices not in $J$ 
and all their attached edges. The {\it cycle graph} $C_n$ consists 
of $n>2$ vertices $1,2,\dots,n$ with an edge joining~$i$ with $i+1$, 
where the indices are read modulo~$n$.

For example, the following  graph is not chordal
\begin{center}
\begin{tikzpicture}
    \draw[fill=black] ( 0, 0) circle (.07);
    \draw[fill=black] ( 2, 0) circle (.07);
    \draw[fill=black] ( 2, 2) circle (.07);
    \draw[fill=black] ( 0, 2) circle (.07);

    \draw[fill=black] ( 1, 1) circle (.07);

    \draw[thick] (0,0) -- (2,0);
    \draw[thick] (2,0) -- (2,2);
    \draw[thick] (2,2) -- (0,2);
  	\draw[thick] (0,2) -- (0,0);

    \draw[thick] (0,0) -- (1,1);
    \draw[thick] (2,0) -- (1,1);
    \draw[thick] (2,2) -- (1,1);
    \draw[thick] (0,2) -- (1,1);
\end{tikzpicture}
\end{center}
since removing the central vertex  leaves the graph $C_4$
\begin{center}
\begin{tikzpicture}
    \draw (-.5,1) node[anchor=east]{$C_4$};
    \draw[fill=black] ( 0, 0) circle (.07);
    \draw[fill=black] ( 2, 0) circle (.07);
    \draw[fill=black] ( 2, 2) circle (.07);
    \draw[fill=black] ( 0, 2) circle (.07);

    \draw[thick] (0,0) -- (2,0);
   \draw[thick] (2,0) -- (2,2);
   \draw[thick] (2,2) -- (0,2);
   \draw[thick] (0,2) -- (0,0);
\end{tikzpicture}
\end{center}
The  following graph on the other hand is chordal
\begin{center}
\begin{tikzpicture}
    \draw[fill=black] ( 0, 0) circle (.07);
    \draw[fill=black] ( 2, 0) circle (.07);
    \draw[fill=black] ( 2, 2) circle (.07);
    \draw[fill=black] ( 0, 2) circle (.07);

    \draw[thick] (0,0) -- (2,0);
   \draw[thick] (2,0) -- (2,2);
   \draw[thick] (2,2) -- (0,2);
   \draw[thick] (0,2) -- (0,0);
   \draw[thick] (0,2) -- (2,0);

\end{tikzpicture}
\end{center}

Finally, a  power series
	$$
	F(x)=\sum_{m\geq 0} c_m x^m, \qquad m=(m_1,\ldots,m_n),  \quad
	x^m:=x_1^{m_1}\cdots  x_n^{m_n}
	$$
is called {\it Horn hypergeometric} if $c_m$ is nonzero
for all $m\ge0$ and
	$$
    \frac{c_{m+e_i}}{c_m},  \qquad
    e_i:=(0,\ldots,\stackrel i 1,\ldots,0) 
	$$
is a rational function of $m_1,\ldots, m_n$ for every $i=1,\ldots, n$.
\begin{remark}
  In the definition of Horn hypergeometric the assumption that $c_m$
  is nonzero could be relaxed (see~\cite{abramov-petkovsek} for a
  general discussion) but it simplifies the arguments and is all we
  will need.
\end{remark}

We can now state our main result.
\begin{theorem}
\label{main-thm}
The following are equivalent.

1) The graph $\Gamma$ is chordal.

2) The power series expansion
	$$
	\frac 1{I_\Gamma(x)}=\sum_{m\geq 0} (-1)^{|m|}c_m x^m, \qquad
	|m|:=m_1+\cdots + m_n\,,
	$$
is Horn hypergeometric.

3)  The power series expansion
	$$
	I_\Gamma(x)^{-s}=\sum_{m\geq 0} (-1)^{|m|}c_m(s) x^m, \qquad
	|m|:=m_1+\cdots + m_n\,,
	$$
is Horn hypergeometric for all $s\not\in\ZZ_{\le0}$.

\end{theorem}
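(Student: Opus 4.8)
The implication $(3)\Rightarrow(2)$ is immediate (take $s=1$), so the substance lies in $(1)\Rightarrow(3)$ and $(2)\Rightarrow(1)$, and I would close the cycle of implications $(1)\Rightarrow(3)\Rightarrow(2)\Rightarrow(1)$. For $(1)\Rightarrow(3)$ the plan is to produce a closed product formula for the coefficients. Fix a clique tree $\mathcal T$ of the chordal graph $\Gamma$: its nodes are the maximal cliques $C$ and each edge carries a separator $S=C\cap C'$, with the running-intersection property that the cliques containing a fixed vertex span a subtree. Writing $m(W)=\sum_{i\in W}m_i$ and $(s)_k=s(s+1)\cdots(s+k-1)$, I claim
\begin{equation*}
c_m(s)=\Big(\prod_{i=1}^n\frac1{m_i!}\Big)\,\frac{\prod_{C}(s)_{m(C)}}{\prod_{\{C,C'\}}(s)_{m(S)}},
\end{equation*}
the products running over the nodes and edges of $\mathcal T$. (A single clique $K_r$ gives $c_m(s)=(s)_{|m|}/\prod_i m_i!$, and the running-intersection property is exactly what makes each $1/m_i!$ appear with exponent one.) Granting the formula, a direct computation yields
\begin{equation*}
\frac{c_{m+e_j}(s)}{c_m(s)}=\frac1{m_j+1}\cdot\frac{\prod_{C\ni j}(s+m(C))}{\prod_{\{C,C'\}\ni j}(s+m(S))},
\end{equation*}
a rational function of $m$ (the sign $(-1)^{|m|}$ in the series being harmless), while $c_m(s)\neq0$ precisely because $(s)_k\neq0$ for all $k\ge0$ when $s\notin\ZZ_{\le0}$; this is $(3)$.

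To establish the formula I would induct on the number of maximal cliques. The base cases are the single clique (explicit) and, via $I_{\Gamma_1\sqcup\Gamma_2}=I_{\Gamma_1}I_{\Gamma_2}$, disjoint unions (a product of Horn series in disjoint variables is Horn, and the formula multiplies). For the inductive step I peel off a leaf clique $C_0$ of $\mathcal T$ with separator $S_0$ and private vertices $A=C_0\setminus S_0$. Splitting independent sets according to whether they meet $S_0$ gives the clique-sum recursion
\begin{equation*}
I_\Gamma=I_{\Gamma'}+\Big(\sum_{u\in A}x_u\Big)\,I_{\Gamma'-S_0},\qquad \Gamma'=\Gamma-A,
\end{equation*}
and it then remains to feed this, together with the inductive formulas for $\Gamma'$ and $\Gamma'-S_0$, into the relation $I_\Gamma^{-s}=\sum_m(-1)^{|m|}c_m(s)x^m$ to confirm the displayed $c_m(s)$. (Equivalently, one checks the coefficient recursion coming from $I_\Gamma\,\partial_j(I_\Gamma^{-s})=-s\,(\partial_jI_\Gamma)\,I_\Gamma^{-s}$.) I expect this verification to be routine but bookkeeping-heavy.

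For $(2)\Rightarrow(1)$ I argue contrapositively, using a restriction principle: setting $x_i=0$ for $i\notin J$ turns $I_\Gamma$ into $I_{\Gamma(J)}$, so the coefficients of $1/I_{\Gamma(J)}$ are exactly the $c_m$ supported on $J$; hence nonvanishing and rationality of the ratios $c_{m+e_i}/c_m$ are inherited by every induced subgraph. A non-chordal $\Gamma$ contains an induced cycle $C_k$ with $k\ge4$, so it suffices to show that $1/I_{C_k}$ is not Horn. Here I invoke the structure theorem for hypergeometric terms (Ore--Sato; see \cite{abramov-petkovsek}): a hypergeometric $c_m$ has the form $c_m=C\,t^m\prod_j\Gamma(\langle v_j,m\rangle+w_j)^{\gamma_j}$ with $v_j\in\ZZ^n$, and rationality of the ratios is again inherited by the restriction of $c_m$ to any coordinate sublattice $\{m_i=\text{const}\}$. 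The crucial point is that a two-variable hypergeometric term which happens to be a \emph{polynomial} must be a product of linear forms, since the $\Gamma$-factors can only contribute rising factorials $\Gamma(\ell+k)/\Gamma(\ell)$. I would then exhibit a two-dimensional slice on which the restricted coefficients form such a polynomial carrying a genuinely nonlinear irreducible factor. For $C_4$, using $I_{C_4}=1+(x_1+x_3+x_1x_3)+(x_2+x_4+x_2x_4)$, a short computation gives
\begin{equation*}
c_{(s,1,t,1)}=(1+s)(1+t)\big(1+s+t+\tfrac12 st\big),
\end{equation*}
whose factor $2+2s+2t+st$ is irreducible over $\Q$ and is not a product of linear forms; hence $1/I_{C_4}$ is not Horn.

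The main obstacle is this last step \emph{uniformly in} $k$: for every $k\ge4$ one must locate a coordinate slice of $C_k$ on which the restricted coefficient sequence is a polynomial bearing an irreducible factor of degree $\ge2$. This cannot be reduced to the case $k=4$, since for $k\ge5$ the cycle $C_k$ has no induced $4$-cycle. Making such a slice explicit and proving both its polynomiality and the irreducibility of the offending factor---most plausibly by generalizing the decomposition $I_{C_4}=1+\alpha+\beta$ above, or by exploiting the cyclic recursion $\sum_{I}(-1)^{|I|}c_{m-\mathbf1_I}=0$ read around the cycle---is where the real difficulty lies; by contrast the forward direction is a finite induction once the product formula is in hand.
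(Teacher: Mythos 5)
Your reduction of $(2)\Rightarrow(1)$ to the cycles $C_k$, $k\ge 4$, and your product formula for $c_m(s)$ in the chordal case are both sound: the clique-tree expression $c_m(s)=\bigl(\prod_i 1/m_i!\bigr)\,\prod_{C}(s)_{m(C)}\big/\prod_{S}(s)_{m(S)}$ is equivalent, after telescoping the Pochhammer symbols along a perfect elimination ordering, to the formula $c_m(s)=\prod_j\binom{s-1+a_j(m)}{m_j}$ that the paper derives from multivariate Lagrange inversion applied to a Nahm system. So $(1)\Rightarrow(3)$ would go through once your induction on leaf cliques is actually carried out; note that the paper's multiplicative recursion $I_\Gamma(x)=(1+x_n)\,I_{\Gamma^*}\!\left(x_i/(1+x_n)^{a_{i,n}}\right)$, which peels off one simplicial vertex at a time, is a much more tractable vehicle for that induction than your additive clique-sum identity, since it interacts cleanly with raising to the power $-s$.

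The genuine gap is in $(2)\Rightarrow(1)$, and it is not the uniformity in $k$ that you flag at the end --- it is your ``crucial point'' itself, which is false. The Ore--Sato structure theorem permits an arbitrary rational-function prefactor, $c_m=R(m)\,t^m\prod_j\Gamma(\langle v_j,m\rangle+w_j)^{\gamma_j}$; consequently \emph{every} polynomial $P(m)$ that is nonvanishing on $\ZZ_{\ge0}^n$ is a hypergeometric term, because $P(m+e_i)/P(m)$ is already a rational function of $m$. In particular your slice sequence $c_{(s,1,t,1)}=(1+s)(1+t)\bigl(1+s+t+\tfrac12 st\bigr)$, irreducible quadratic factor and all, \emph{is} Horn hypergeometric, so the computation yields no contradiction even for $C_4$, let alone for general $k$. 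A different invariant is needed. The paper restricts instead to the main diagonal $m=(k,\dots,k)$, where the coefficients of $1/I_{C_n}$ become the de Bruijn numbers $\sum_{|j|\le k}(-1)^j\binom{2k}{k+j}^n$; their consecutive ratio tends to $\kappa_n=(2\cos(\pi/2n))^{2n}$ as $k\to\infty$. For a Horn series this diagonal ratio is a rational function of $k$ with rational coefficients, so its limit must lie in $\QQ\cup\{0,\infty\}$, whereas a short cyclotomic argument shows $\kappa_n$ is irrational precisely when $n\ge4$. That asymptotic/irrationality step is the idea missing from your proposal.
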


The proof of the main theorem is spread over the next several
sections. In Corollary~\ref{main-thm-1} we prove that 1) $\Rightarrow$ 3). 
We then prove that 2) $\Rightarrow$ 1), which takes longer
and is completed in Proposition~\ref{main-thm-2}.  This finishes the
proof as the remaining implication 3)$\Rightarrow$ 2) is trivial.
We include in the last section~\S\ref{misc} some miscellaneous 
results that arose in the process of proving the main result.

We should mention that by a theorem of
Cartier-Foata~\cite{cartier-foata} the coefficients $c_m$
in Theorem~\ref{main-thm} 2) have a combinatorial interpretation and
are in particular positive integers. Indeed, consider the algebra
$A_\Gamma$
generated over $\Q$ by elements $w_1,\ldots,w_n$ with relations
	$$
	w_iw_j=w_jw_i,
	$$
if and only if $i$ and  $j$ are not connected by an edge in
$\Gamma$. Then  Cartier-Foata~\cite{cartier-foata} prove that  
	\begin{equation}
	\label{cartier-foata}
	\frac 1{\sum_I (-1)^{\#I} w^I}=\sum_J w^J,
	\end{equation}
where the sum  on the left runs over subsets $I\subseteq \{1,\ldots,n\}$
such that all $w_i$  with $i\in I$ commute with each other, whereas
the sum on  the right runs over distinct monomials $w^J$ in the algebra.

Now consider the abelianization map
$$
\begin{array}{cccc}
\Phi:\quad & A_\Gamma &\rightarrow &\Q[x_1,\ldots,x_n]\\
&  w_i& \mapsto &x_i\,.
\end{array}
$$
Applied to the left hand  side of~\eqref{cartier-foata} we obtain
$I_\Gamma(-x)^{-1}$. Hence we deduce that
	$$
	c_m=\#\{ J \,|\,  \Phi(w^J)=x^m\}\,.
	$$
In other words, $c_m$ counts all the  rearrangements  of the monomial
$w_1^{m_1}\cdots w_n^{m_n}$ that give distinct monomials in $A_\Gamma$.

For example, if $\Gamma=K_n$ is the complete graph on $n$ vertices
then $I_{K_n}(x)=1+x_1+\cdots +x_n$ and
$$
\frac 1 {1-x_1-\cdots-x_n}=\sum_{m\geq 0}
\frac{(m_1+\cdots+m_n)!}{m_1!\cdots m_n!}x^m.
$$
In fact, the right hand  side is Horn hypergeometric and this is a
simple instance of the main theorem  since $K_n$ is clearly chordal.

\section{}

To any integral matrix $A\in \Z^{n\times n}$ we associate the
following Nahm system of equations
\begin{equation}
\label{nahm-syst}
1-z_i=x_i\prod_{j=1}^nz_j^{a_{i,j}},\qquad i =1,\ldots, n.
\end{equation}
We call it a Nahm system, because it specializes (for $A$
 symmetric and positive-definite) to the system considered by Nahm
in his conjecture on the modularity of certain associated
$q$-hypergeometric series when $x_i=1$ (see~\cite[p.~42]{nahm},
\cite[eq.~(25)]{zagier-dilog}).  We think of the system as expressing
the $z's$ as algebraic functions of the $x's$ and we are interested in
the corresponding power series expressions for~$z_i$. Note that $z_i=1$
when $x_i=0$, so these power series have constant term equal to $1$.

It follows from the multivariate Lagrange inversion
(see~\cite{frv-A-pol} for details) that for any $s_1,\ldots, s_n$ 
we have
\begin{equation}
\label{z-power-series}
z_1^{s_1}\cdots z_n^{s_n}= \frac1 D
\sum_{m\geq 0} (-1)^{|m|}\prod_{j=1}^n\binom{s_j+a_j(m)}{m_j} x^m,
\end{equation}
where
\begin{equation}
\label{D-power-series}
D:=\sum_{m\geq 0} (-1)^{|m|}\prod_{j=1}^n\binom{a_j(m)}{m_j} x^m,
\end{equation}
and
	$$
	a_j(m)=\sum_{i=1}^na_{i,j}m_j
	$$
are the linear forms  determined by the columns of~$A$. 
Here we interpret the binomial coefficients as polynomials 
of the top entry
	$$
	\binom  xm:=\frac{x(x-1)\cdots(x-m+1)}{m!}\,,
	$$
for any non-negative integer~$m$. We also have
\begin{equation}
\label{D-det}
D^{-1}=\det\left(I_n +
  \diag\left(\frac{1-z_1}{z_1},\ldots,\frac{1-z_n}{z_n}\right)A\right),
\end{equation}
where~$I_n$ is the identity matrix of size~$n$.

If $A$ is upper triangular with $1$'s along the diagonal then we can
recursively solve for the~$z's$ in terms of the~$x's$. In particular,
$z_i$ is a rational function of $x_1,\ldots,x_n$. It also follows
easily from~\eqref{D-det} that in this case 
	\begin{equation}
	\label{D-prod}
	D=z_1\cdots z_n\,.
	\end{equation}

It appears to be rare for non upper triangular matrices~$A$ 
(more precisely, for matrices that are not permutation-similar 
to an upper triangular matrix) to give rise to rational~$z's$,
but it does happen. A simple but interesting example 
(related to the $5$-term relation for the 
dilogarithm~\cite{frv-non-orientable}) is the following. Take
$A=\left(\begin{smallmatrix}0&1\\1&0\end{smallmatrix}\right)$ 
then one easily checks that
	$$
	D=\frac1{1-x_1x_2}\,,\qquad
	z_1=\frac{1-x_1}{1-x_1x_2}\,,\qquad
	z_2=\frac{1-x_2}{1-x_1x_2}\,.
	$$

We have the following recursion for~$D$.
\begin{proposition}
Let $A$ be upper-triangular with
$1$'s along the diagonal. Let $A^*$ be the $(n-1)\times(n-1)$ matrix
obtained by removing the $n$-th row and column of $A$ and let $D^*$ be
the corresponding value of $D$ as in~\eqref{D-power-series} for
$A^*$. Then
\begin{equation}
\label{D-recursion}
D(x_1,\ldots,x_n)=\frac 1{1+x_n}
D^*\left(\frac{x_1}{(1+x_n)^{a_{1,n}}},
  \ldots,\frac{x_{n-1}}{(1+x_n)^{a_{n-1,n}}}\right)\,. 
\end{equation}
\end{proposition}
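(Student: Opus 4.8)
The plan is to avoid manipulating the binomial-coefficient series~\eqref{D-power-series} directly and instead exploit the explicit recursive solvability of the Nahm system together with the product formula~\eqref{D-prod}. First I would solve the last equation. Since $A$ is upper-triangular with $1$'s along the diagonal, we have $a_{n,j}=0$ for $j<n$ and $a_{n,n}=1$, so the $n$-th equation of~\eqref{nahm-syst} reads $1-z_n=x_n z_n$, giving the closed form
$$
z_n=\frac 1{1+x_n}\,.
$$
This is the one place where the last variable decouples completely, and it will produce the prefactor $1/(1+x_n)$ in~\eqref{D-recursion}.

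Next I would substitute this value into the remaining $n-1$ equations and recognize them as the Nahm system attached to $A^*$. Let $z_1^*,\ldots,z_{n-1}^*$ denote the solutions of~\eqref{nahm-syst} for $A^*$. For $i<n$ the upper-triangular structure gives $a_{i,j}=0$ for $j<i$ and $a_{i,i}=1$, so the $i$-th equation of~\eqref{nahm-syst} is
$$
1-z_i=x_i\,z_n^{a_{i,n}}\,z_i\prod_{j=i+1}^{n-1}z_j^{a_{i,j}}\,.
$$
Setting $x_i^*:=x_i\,z_n^{a_{i,n}}=x_i/(1+x_n)^{a_{i,n}}$ turns this into precisely the $i$-th equation of the Nahm system for $A^*$ in the unknowns $z_1,\ldots,z_{n-1}$ and variables $x_1^*,\ldots,x_{n-1}^*$. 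The step that requires care, and the only genuine obstacle, is uniqueness: both $z_i$ (for $i<n$) and $z_i^*$ are formal power series solutions of the same system with constant term $1$, and such a solution is unique, so I may conclude $z_i(x)=z_i^*(x_1^*,\ldots,x_{n-1}^*)$ for $i=1,\ldots,n-1$.

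Finally I would assemble the recursion from the product formula~\eqref{D-prod}. Applying it to $A$ gives $D=z_1\cdots z_n$, and applying it to $A^*$ (which is again upper-triangular with $1$'s on the diagonal) gives $D^*=z_1^*\cdots z_{n-1}^*$. Combining this with the two displays above yields
$$
D=z_n\prod_{i=1}^{n-1}z_i=\frac 1{1+x_n}\prod_{i=1}^{n-1}z_i^*(x_1^*,\ldots,x_{n-1}^*)=\frac 1{1+x_n}\,D^*\!\left(\frac{x_1}{(1+x_n)^{a_{1,n}}},\ldots,\frac{x_{n-1}}{(1+x_n)^{a_{n-1,n}}}\right),
$$
which is exactly~\eqref{D-recursion}. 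Everything other than the uniqueness step is a direct substitution. As a consistency check one could instead verify the identity at the level of the series~\eqref{D-power-series}, but that route is considerably more painful, whereas the structural argument through the closed form for $z_n$ is clean and transparent.
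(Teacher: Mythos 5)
Your proof is correct, but it takes a genuinely different route from the paper's. The paper proves the recursion directly at the level of the series~\eqref{D-power-series}: since $a_{n,j}=0$ for $j<n$, the linear forms $a_j(m)$ with $j<n$ do not involve $m_n$, while $a_n(m)=m_n+\sum_{i<n}a_{i,n}m_i$, so one can sum over $m_n$ first using
\[
\sum_{m\geq 0}(-1)^m\binom{a+m}{m}x^m=\frac1{(1+x)^{a+1}},
\]
and the factor $(1+x_n)^{-1-\sum_{i<n}a_{i,n}m_i}$ produces both the prefactor and the rescaling $x_i\mapsto x_i/(1+x_n)^{a_{i,n}}$ coefficientwise. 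You instead work at the level of the Nahm system: the closed form $z_n=1/(1+x_n)$, the identification of the reduced system with the Nahm system for $A^*$ in the rescaled variables, uniqueness of the formal solution with constant term $1$, and the product formula~\eqref{D-prod}. Your argument is sound and non-circular --- \eqref{D-prod} is established from~\eqref{D-det} before the proposition --- and you are right to flag uniqueness as the one point needing care (it holds by the standard $x$-adic contraction argument, or simply by recursive solvability in the upper-triangular case). The trade-off is that the paper's computation is self-contained given only the definition of $D$ and an elementary binomial identity, whereas yours leans on \eqref{D-det}/\eqref{D-prod}, which the paper imports from multivariate Lagrange inversion; in exchange, your version is more structural and makes transparent where the prefactor $1/(1+x_n)$ and the substitution $x_i\mapsto x_i z_n^{a_{i,n}}$ come from.
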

\begin{proof}
The claim follows from 
$$
\sum_{m\geq 0}(-1)^m\binom{a+m}mx^m=\frac1{(1+x)^{a+1}}\,.
$$
We leave the details to the reader.
\end{proof}
The following corollary is immediate.
\begin{corollary}
If $A$ is an upper-triangular matrix with $1$'s along 
the diagonal, then $D$ is rational with denominator 
of the form
	$$
	\prod_{j=1}^n(1+x_j)^{k_j}
	$$
for certain non-negative integers $k_j$.
\end{corollary}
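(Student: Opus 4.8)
The plan is to induct on $n$, using the recursion~\eqref{D-recursion} as the driving tool. Rationality alone is in fact immediate without the recursion: since $A$ is upper triangular with $1$'s on the diagonal, the Nahm system~\eqref{nahm-syst} becomes $1-z_i=x_iz_i\prod_{j>i}z_j^{a_{i,j}}$, which can be solved from the bottom row upward, expressing each $z_i$ as a rational function of $x_i,\dots,x_n$; by~\eqref{D-prod} the product $D=z_1\cdots z_n$ is then rational as well. So the real content of the corollary is the precise shape of the denominator, and this is where I would let the recursion do the work.

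For the base case $n=1$ we have $A=(1)$, hence $a_1(m)=m_1$, and~\eqref{D-power-series} gives $D=\sum_{m\ge0}(-1)^m\binom{m}{m}x_1^m=(1+x_1)^{-1}$, which has the asserted form with $k_1=1$. For the inductive step I would assume the statement for the top-left block $A^*$, writing $D^*(y)=N^*(y)\big/\prod_{j=1}^{n-1}(1+y_j)^{k_j^*}$ with $N^*$ a polynomial and each $k_j^*\ge0$, and then apply~\eqref{D-recursion} with $\tilde x_j:=x_j(1+x_n)^{-a_{j,n}}$ to get $D(x)=\tfrac1{1+x_n}\,D^*(\tilde x_1,\dots,\tilde x_{n-1})$.

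The remaining task is to substitute and extract the denominator, and this is where I expect the real difficulty to lie. Under the substitution one has $1+\tilde x_j=\big((1+x_n)^{a_{j,n}}+x_j\big)\big/(1+x_n)^{a_{j,n}}$, so the inductive denominator $\prod_j(1+\tilde x_j)^{k_j^*}$ produces factors $\big((1+x_n)^{a_{j,n}}+x_j\big)^{k_j^*}$ together with a power of $(1+x_n)$ in the numerator, while $N^*(\tilde x)$ contributes its own powers of $(1+x_n)$ and the prefactor $\tfrac1{1+x_n}$ contributes one more. The crux is to carry out this bookkeeping carefully — keeping track of the powers of $(1+x_n)$ coming from $N^*$, from the cleared denominators, and from the prefactor — and to verify that after all cancellations the denominator of $D$ is again a product of factors of the claimed type with nonnegative exponents $k_j$. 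Here the triangular structure of $A$ should be used decisively: it is what guarantees that the entries $a_{j,n}$ enter only through the last column, and that for $j<n$ the linear forms $a_j(m)$ are exactly the forms attached to $A^*$, so that the inductive hypothesis applies verbatim to $D^*$. I would regard this denominator control, rather than the rationality, as the heart of the argument.
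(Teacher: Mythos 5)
Your setup is the right one---the paper offers no argument beyond declaring the corollary immediate from the recursion~\eqref{D-recursion}, and an induction on~$n$ with base case $D=(1+x_1)^{-1}$ is clearly what is intended; your side remark that rationality already follows from back-substitution in~\eqref{nahm-syst} together with~\eqref{D-prod} is also correct. The problem is precisely the step you defer. You compute, correctly, that the substitution turns each inductive factor $(1+y_j)^{k_j^*}$ into $\bigl((1+x_n)^{a_{j,n}}+x_j\bigr)^{k_j^*}$ divided by a power of $(1+x_n)$, and you then express the hope that ``after all cancellations'' the denominator is again of the form $\prod_j(1+x_j)^{k_j}$. That verification cannot be carried out: when $a_{j,n}\ge 1$ the polynomial $(1+x_n)^{a_{j,n}}+x_j$ is irreducible (it is linear in $x_j$ with unit leading coefficient), is not an associate of any $1+x_i$, and does not cancel against the numerator in general, so the induction does not close.

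The smallest nontrivial instance already shows this. Take $n=2$ and $A=\psmm 1101$, the matrix attached to the complete graph $K_2$. Then~\eqref{D-recursion}, or a direct summation of~\eqref{D-power-series}, gives $D=\tfrac1{1+x_2}\cdot\bigl(1+\tfrac{x_1}{1+x_2}\bigr)^{-1}=\tfrac1{1+x_1+x_2}$, consistent with the identity~\eqref{D-indep} proved in the next section, and $1+x_1+x_2$ is not of the form $(1+x_1)^{k_1}(1+x_2)^{k_2}$. So what you flagged as ``the heart of the argument'' is not a bookkeeping difficulty but an actual obstruction: the denominator claim as printed holds only when the off-diagonal entries in each last column vanish (e.g.\ $A$ diagonal), and what the recursion genuinely yields in general is (i) that $D$ is rational, and (ii) that its denominator is the polynomial obtained by iterating the substitution $x_j\mapsto x_j(1+x_n)^{-a_{j,n}}$ and clearing the resulting powers of $1+x_n$---which for graph matrices is exactly the independence polynomial. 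You should either restrict the hypothesis or restate the denominator in that iterated form; as written, no amount of cancellation tracking will complete your inductive step.
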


We associate to a graph $\Gamma$ with $n$ labeled vertices the 
following upper-triangular matrix $A=(a_{i,j})$ with $1's$ along 
the diagonal. 
\begin{equation}
\label{A-defn}
a_{i,j}=
\begin{cases}
1\,, & i=j\,,\\
1\,, & i\sim j\,, \quad i<j\\
0\,,& {\text otherwise}\,,
\end{cases},
\end{equation}
where $i\sim j$ means that the two vertices~$i$ and~$j$ are connected
by an edge in~$\Gamma$. In other words,~$A$ is basically the top half
of the adjacency matrix of~$\Gamma$.

\section{}
\label{perf-elim-ord}
A (reverse) {\it perfect elimination ordering} of the 
vertices of~$\Gamma$ is a labeling of the vertices such 
that for each $1\leq k\leq n$ the 
subgraph $\Gamma_k\subseteq \Gamma$ induced by the set 
of vertices with labels $1\leq i<k$ connected to the $k$-th 
vertex is a complete graph~\cite[Ch.4, \S2]{golumbic},~\cite{fulkerson-gross}.

For example, the following is a perfect elimination ordering of the
graph $\Gamma$.
\begin{figure}[H]
\centering
\begin{tikzpicture}

    \draw (0,0) node[anchor=east]{$1$};
    \draw (2,0) node[anchor=west]{$2$};
    \draw (0,2) node[anchor=east]{$3$};
    \draw (2,2) node[anchor=west]{$4$};

    \draw[fill=black] ( 0, 0) circle (.07);
    \draw[fill=black] ( 2, 0) circle (.07);
    \draw[fill=black] ( 0, 2) circle (.07);
    \draw[fill=black] ( 2, 2) circle (.07);

    \draw[thick] (0,0) -- (2,0);
    \draw[thick] (2,0) -- (2,2);
    \draw[thick] (0,0) -- (0,2);
    \draw[thick] (0,2) -- (2,2);
    \draw[thick] (2,0) -- (0,2);
\end{tikzpicture}
\caption{$\Gamma$} 
\end{figure}

These are the corresponding subgraphs $\Gamma_k$.
\begin{figure}[H]
\centering
\begin{tikzpicture}
  \draw (2,1.5) node[anchor=east]{$\Gamma_4$};
    \draw (2,0) node[anchor=west]{$2$};
    \draw (0,2) node[anchor=east]{$3$};

    \draw[fill=black] ( 2, 0) circle (.07);
    \draw[fill=black] ( 0, 2) circle (.07);

    \draw[thick] (2,0) -- (0,2);
\end{tikzpicture}
\quad
\begin{tikzpicture}
  \draw (1,1) node[anchor=north]{$\Gamma_3$};
    \draw (0,0) node[anchor=east]{$1$};
    \draw (2,0) node[anchor=west]{$2$};

    \draw[fill=black] ( 0, 0) circle (.07);
    \draw[fill=black] ( 2, 0) circle (.07);

    \draw[thick] (0,0) -- (2,0);

\end{tikzpicture}
\quad
\begin{tikzpicture}
  \draw (0,1) node[anchor=north]{$\Gamma_2$};
    \draw (0,0) node[anchor=east]{$1$};
    \draw[fill=black] ( 0, 0) circle (.07);
\end{tikzpicture}
\end{figure}

\begin{proposition}
Let $\Gamma$ be a graph with a given perfect elimination ordering 
of its vertices. Let $A$ be the corresponding upper triangular 
matrix defined above. Then 
\begin{equation}
\label{D-indep}
D(x_1,\ldots,x_n)=\frac 1{I_\Gamma(x_1,\ldots,x_n)}\,,
\end{equation}
where $I_\Gamma$ is the independence polynomial of~$\Gamma$ 
and~$D$ is defined in~\eqref{D-power-series}.
\end{proposition}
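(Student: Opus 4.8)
The plan is to argue by induction on the number of vertices $n$, combining the recursion~\eqref{D-recursion} for $D$ with a matching recursion for the independence polynomial. For the base case $n=1$ we have $A=(1)$, so $a_1(m)=m$ and~\eqref{D-power-series} gives $D=\sum_{m\ge0}(-1)^m\binom{m}{m}x_1^m=(1+x_1)^{-1}$, which is exactly $I_\Gamma(x_1)^{-1}=(1+x_1)^{-1}$.

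For the inductive step, let $\Gamma^*$ denote the subgraph induced on the vertices $\{1,\ldots,n-1\}$. The defining condition of a perfect elimination ordering only constrains the sets $\Gamma_k$ with $k\le n-1$, and deleting the last vertex leaves these unchanged; hence $1,\ldots,n-1$ is a perfect elimination ordering of $\Gamma^*$, and its associated matrix is precisely $A^*$. By the inductive hypothesis $D^*=1/I_{\Gamma^*}$. Substituting this into~\eqref{D-recursion}, the desired identity $D=1/I_\Gamma$ reduces to the purely combinatorial statement
$$
I_\Gamma(x_1,\ldots,x_n)=(1+x_n)\,I_{\Gamma^*}\!\left(\frac{x_1}{(1+x_n)^{a_{1,n}}},\ldots,\frac{x_{n-1}}{(1+x_n)^{a_{n-1,n}}}\right).
$$

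To prove this I would expand the right-hand side. Writing $y_i$ for the $i$-th substituted variable and recalling that $a_{i,n}=1$ exactly when $i$ is an earlier neighbour of $n$, each independent set $J$ of $\Gamma^*$ contributes $y^J=x^J(1+x_n)^{-|J\cap N|}$, where $N=\{i<n:i\sim n\}$. Here is the crucial use of the hypothesis: applying the perfect elimination ordering condition at $k=n$, the set $N$ induces a complete graph, so every independent set satisfies $|J\cap N|\le1$. Thus the factor $(1+x_n)^{-|J\cap N|}$ is either $1$ or $(1+x_n)^{-1}$, and after multiplying by $(1+x_n)$ the contribution of $J$ is $x^J$ when $|J\cap N|=1$ and $x^J+x_nx^J$ when $J\cap N=\varnothing$. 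Summing, the terms $x^J$ over all independent $J$ reassemble $I_{\Gamma^*}(x)$, which is the contribution of the independent sets of $\Gamma$ not containing $n$; the remaining terms $x_nx^J$, ranging over independent $J$ disjoint from $N$, equal $\sum x^{\{n\}\cup J}$, the contribution of the independent sets of $\Gamma$ containing $n$ (since $\{n\}\cup J$ is independent precisely when $J$ is independent and avoids $N$). Their total is $I_\Gamma(x)$, as required.

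The main obstacle is the identity in the previous paragraph; everything else is bookkeeping and an application of the earlier recursion. The whole point is that the perfect elimination ordering forces the neighbourhood $N$ of the eliminated vertex to be a clique, and this is exactly what collapses the a priori high powers of $(1+x_n)$ appearing in $y^J$ down to the single power of $(1+x_n)$ occurring in~\eqref{D-recursion}. I expect no difficulty beyond carefully tracking this collapse.
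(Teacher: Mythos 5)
Your proof is correct and follows essentially the same route as the paper: induction on $n$ via the recursion~\eqref{D-recursion}, reduced to the identity $I_\Gamma(x)=(1+x_n)\,I_{\Gamma^*}(y)$, which is established by using the clique condition on the earlier neighbours of $n$ to bound $|J\cap N|\le 1$. The paper phrases this last step directly in terms of which independent sets of $\Gamma$ extend an independent set of $\Gamma^*$ by the vertex $n$, but the content is identical; your write-up merely makes the base case and the restriction of the perfect elimination ordering explicit.
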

\begin{proof}
We prove the claim by induction in $n$ with the 
recursion~\eqref{D-recursion} as the key step, the case of 
one vertex being trivial. We identify the vertices of~$\Gamma$ 
with $\{1,\ldots,n\}$ using the given perfect elimination 
ordering. Let~$\Gamma^*$ be the graph obtained from~$\Gamma$ 
by deleting the vertex~$n$ and all of its attached edges.

Let $I^*\subseteq \{1,\ldots,n-1\}$
be an independent set of $\Gamma^*$.
It can contain at most one vertex connected to $n$
in $\Gamma$ since by definition of perfect elimination 
ordering any two such vertices are connected by an edge. 
We conclude that an independent set~$I$ of~$\Gamma$ 
properly contains~$I^*$ if and only if no vertex in $I^*$ 
is connected to~$n$, in which case $I=I^*\cup\{n\}$. 

In terms of the independence polynomial this can be 
formulated as follows. Let 
	$$
	y_i:=\begin{cases}
	x_i/(1+x_n)\,,& i\sim n\,,\\
	x_i\,, &{\text otherwise}\,.
	\end{cases}
	$$
Then
	$$
	I_{\Gamma}(x_1,\ldots,x_n)=(1+x_n)I_{\Gamma^*}(y_1,\ldots,y_{n-1})\,.
	$$
This is precisely the recursion satisfied by $\frac{1}{D}$ in terms 
of $\frac{1}{D^*}$ by~\eqref{D-recursion} and the claim follows.
\end{proof}
To prove that 1) implies 3) in Theorem~\ref{main-thm} we need the
following (compare with~\cite{RRV}[\S12.4]).
\begin{corollary}
\label{main-corollary}
With the hypothesis of the proposition we have for all $s$ 
    \begin{equation} \label{eq:main-corollary}
    I_\Gamma(x_1,\ldots,x_n)^{-s}=
    \sum_{m\geq 0} (-1)^{|m|}\prod_{j=1}^n\binom{s-1+a_j(m)}{m_j}\, x^m\,.
    \end{equation}
In particular, if $s$ is not an integer $\le 0$, then
$I_\Gamma(x_1,\ldots,x_n)^{-s}$ is Horn hypergeometric.
\end{corollary}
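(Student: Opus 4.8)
The plan is to read off the identity \eqref{eq:main-corollary} directly from the power series formula \eqref{z-power-series}, and then to verify the Horn hypergeometric property from the explicit shape of the coefficients. The analytic identity is essentially immediate given the machinery already in place, so the real content is the coefficient analysis.

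First I would apply \eqref{z-power-series} with the uniform choice $s_1=\cdots=s_n=s-1$, which is legitimate since each $z_j$ has constant term $1$ and hence $z_j^{s-1}$ is a well-defined formal power series. This gives
$$
z_1^{s-1}\cdots z_n^{s-1}=\frac1D\sum_{m\geq 0}(-1)^{|m|}\prod_{j=1}^n\binom{s-1+a_j(m)}{m_j}x^m.
$$
Multiplying through by $D$ and invoking \eqref{D-prod}, namely $D=z_1\cdots z_n$ (valid because our $A$ is upper triangular with $1$'s on the diagonal), the left-hand side collapses to $(z_1\cdots z_n)^s=D^s$. By the preceding proposition, \eqref{D-indep}, we have $D=I_\Gamma^{-1}$, so $D^s=I_\Gamma^{-s}$, and \eqref{eq:main-corollary} drops out at once.

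For the Horn hypergeometric claim I would analyze the coefficient $c_m(s)=\prod_{j=1}^n\binom{s-1+a_j(m)}{m_j}$. The crucial structural input is that $A$ is upper triangular with diagonal entries $1$ and nonnegative off-diagonal entries, so each column form satisfies $a_j(m)=m_j+\sum_{i<j}a_{i,j}m_i\geq m_j$ and is a nonnegative integer. A factor $\binom{s-1+a_j(m)}{m_j}$ vanishes precisely when $s-1+a_j(m)\in\{0,1,\ldots,m_j-1\}$; writing such a value as $s=\ell+1-a_j(m)$ with $0\le\ell\le m_j-1$ and using $a_j(m)\ge m_j$ forces $s$ to be an integer $\le 0$. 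Hence for $s\notin\ZZ_{\le0}$ every factor, and therefore $c_m(s)$ itself, is nonzero. For the ratios, linearity of $a_j$ gives $a_j(m+e_i)=a_j(m)+a_{i,j}$, and the quotient $c_{m+e_i}(s)/c_m(s)$ splits into the factor for $j=i$ and the factors for $j\neq i$; each is resolved by the elementary identities $\binom{t+1}{k}/\binom{t}{k}=(t+1)/(t+1-k)$ and $\binom{t}{k+1}/\binom{t-1}{k}=t/(k+1)$, producing a rational function of $m_1,\ldots,m_n$. The sign $(-1)^{|m|}$ contributes only a constant factor $-1$ to each ratio and so does not affect rationality.

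The one place where care is genuinely required — and the only point where the hypothesis truly enters — is the nonvanishing of the coefficients: it rests on the inequality $a_j(m)\ge m_j$, which comes from the diagonal of $A$ being $1$, and it is exactly this inequality that pins down the excluded set as $s\in\ZZ_{\le0}$. Everything else is formal manipulation of \eqref{z-power-series} together with the binomial-ratio identities, so I expect no essential obstacle beyond getting this exceptional set precisely right.
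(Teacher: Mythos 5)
Your proof is correct and follows essentially the same route as the paper: apply \eqref{z-power-series} with equal exponents, use \eqref{D-prod} and \eqref{D-indep} to identify the left-hand side with $I_\Gamma^{-s}$, and then observe that $a_j(m)\ge m_j$ forces any vanishing factor $\binom{s-1+a_j(m)}{m_j}$ to put $s$ in $\ZZ_{\le 0}$. The paper's proof is just a terser version of this; your explicit verification of the rationality of the coefficient ratios fills in a step the paper leaves implicit.
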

\begin{proof}
The first claim follows from the above proposition by 
using~\eqref{z-power-series} and~\eqref{D-prod}. 
Since $a_j(m)$ is an integer and $a_j(m)\ge m_j$,
the binomial coefficient in~\eqref{eq:main-corollary} can vanish 
only if~$s$ is a non-positive integer.
\end{proof}

As an example of~\eqref{D-indep} we have the following expansion. For
any positive integer $n$
	$$
	\frac 1{F_{n+2}(x_1,\ldots,x_n)}=
	\sum_{m\geq 0} (-1)^{|m|}
	\prod_{j=2}^{n}\binom{m_j+m_{j-1}}{m_j}\,x^m\,,
	$$
where $F_n$ is the Fibonacci polynomial~\eqref{fibonacci-defn}. 
It is clear that the labeling
\begin{center}
\begin{tikzpicture}
    \draw (0,0) node[anchor=south]{$1$};
    \draw (1,0) node[anchor=south]{$2$};
    \draw (3,0) node[anchor=south]{$n-1$};
    \draw (4,0) node[anchor=south]{$n$};

    \draw[fill=black] ( 0, 0) circle (.07);
    \draw[fill=black] ( 1, 0) circle (.07);
    \draw[fill=black] ( 3, 0) circle (.07);
    \draw[fill=black] ( 4, 0) circle (.07);

    \draw[thick] (0,0) -- (1,0);
    \draw[dashed] (1,0) -- (1.5,0);
    \draw[dashed] (2.5,0) -- (3,0);
    \draw[thick] (3,0) -- (4,0);

\end{tikzpicture}
\end{center}
of the vertices of $L_n$ is a perfect elimination ordering.

Note that Corollary~\ref{main-corollary} implies, in particular, 
that if a graph $\Gamma$ has a perfect elimination ordering then 
its independence polynomial $I_\Gamma$ satisfies that the expansion 
of $I_\Gamma^{-s}$ in power series is Horn hypergeometric
for all $s\not\in\ZZ_{\le 0}$. Not every
graph has a perfect elimination ordering. It is a 
remarkable fact~\cite[Thm.~4.1]{golumbic} that a graph has a 
perfect elimination ordering if and only if it is chordal.
We conclude the following.
\begin{corollary}
\label{main-thm-1}
Let $\Gamma$  be a chordal graph. Then its independence
polynomial $I_\Gamma$ satisfies that the expansion of 
$I_\Gamma^{-s}$ in a power series is Horn hypergeometric
for all $s\not\in\ZZ_{\le 0}$.
\end{corollary}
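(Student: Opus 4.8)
The plan is to obtain this statement as an immediate consequence of Corollary~\ref{main-corollary}, whose hypothesis is the existence of a perfect elimination ordering rather than chordality outright. Corollary~\ref{main-corollary} already carries all of the analytic content: for any graph equipped with such an ordering it produces the closed form~\eqref{eq:main-corollary} for $I_\Gamma^{-s}$ and checks directly that the resulting series is Horn hypergeometric as soon as $s\not\in\ZZ_{\le0}$. Thus the only gap to fill is the passage from ``chordal'' to ``admits a perfect elimination ordering.''

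First I would invoke the classical equivalence~\cite[Thm.~4.1]{golumbic}: a graph is chordal if and only if it has a perfect elimination ordering. Applying this to the given chordal $\Gamma$, I relabel its $n$ vertices so that for each $k$ the neighbours of the $k$-th vertex lying among $\{1,\dots,k-1\}$ induce a complete subgraph. With this labeling in force I form the upper-triangular matrix $A$ of~\eqref{A-defn} and quote Corollary~\ref{main-corollary} verbatim; its conclusion is precisely that $I_\Gamma^{-s}$ is Horn hypergeometric for every $s\not\in\ZZ_{\le0}$.

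It remains only to observe that the statement refers to $I_\Gamma$ with no mention of an ordering, while the argument fixes a particular one. This causes no trouble, because being Horn hypergeometric is invariant under permutation of the variables: a relabeling by a permutation $\sigma$ replaces the coefficient array $(c_m)$ by $(c_{\sigma(m)})$, which leaves intact both the nonvanishing $c_m\neq0$ and the rationality in $m$ of each ratio $c_{m+e_i}/c_m$. Hence the property is independent of the labeling, and establishing it for the perfect elimination ordering suffices.

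I do not anticipate a genuine obstacle: the real work has been front-loaded into Corollary~\ref{main-corollary}, where the multivariate Lagrange inversion~\eqref{z-power-series} together with the recursion~\eqref{D-recursion} do the heavy lifting, and into the cited graph-theoretic equivalence. In this sense Corollary~\ref{main-thm-1} is little more than a translation marrying the combinatorial characterization of chordality to the hypergeometric expansion supplied by the preceding corollary; the one point worth stating explicitly is the labeling-invariance just noted.
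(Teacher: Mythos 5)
Your proposal is correct and follows the paper's own route exactly: the paper likewise deduces the corollary immediately from Corollary~\ref{main-corollary} together with the cited equivalence between chordality and the existence of a perfect elimination ordering. The remark on labeling-invariance of the Horn hypergeometric property is a harmless (and correct) extra detail that the paper leaves implicit.
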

This is one direction in our main theorem. To prove the reverse
direction will take a bit more work.

The first observation is that if $\Gamma(J)\subseteq \Gamma$ is the
subgraph  induced by a subset $J$ of its vertices then
$I_{\Gamma(J)}$ is obtained from $I_\Gamma$ by setting $x_j=0$ for
every $j$ not in $J$. It follows that the independence polynomial
$I_\Gamma$ of a  non-chordal graph~$\Gamma$ specializes to the
independence polynomial $I_n$ of the cycle graph $C_n$ for some $n \geq
4$ by setting  appropriate variables equal to zero.

The second observation is that for a power series the property of
being Horn hypergeometric is preserved by the specialization to zero
of any number of its variables. Hence, to finish the proof of the main
theorem it is enough to show that $I_n$ is not Horn
hypergeometric for any $n\geq 4$. 

Notice that $I_3^{-1}(x)$ {\it is} Horn hypergeometric.
Indeed, we have  
	$$
	I_3(x_1,x_2,x_3)=1+x_1+x_2+x_3,\quad \quad
	\frac 1 {I_3(x_1,x_2,x_3)}=\sum_{m_1,m_2,m_3\geq 0}
	(-1)^{|m|}\frac{(m_1+m_2+m_3)!}{m_1!m_2!m_3!} 
	x_1^{m_1}x_2^{m_2}x_3^{m_3}.
	$$ 
In fact, we have a case of the {\it strong law of small numbers}: the
cycle graph $C_n$ and the complete graph $K_n$ coincide for $n=3$ but
not for any $n\geq 4$.

\section{}

\bigskip

Recall that $I_n(x_1,\ldots,x_n)$ denotes the independence 
polynomial of the cycle graph $C_n$ for $n\geq 3$. 
It will be convenient to extend the definition and include 
	$$
	I_1(x_1):=1+x_1\,,\qquad I_2(x_1,x_2):=1+x_1+x_2\,.
	$$

We would like to describe the coefficients in the power series
expansion of $I_n(x)^{-1}$. We will make use of the Nahm
system~\eqref{nahm-syst} associated to the following matrix
	$$
	A:=\left(
	\begin{array}{cccccc}
	1&0&0&\cdots&0&1\\
	1&1&0&\cdots&0&0\\
	0&1&1&\cdots&0&0\\
	\vdots&\vdots&\vdots&\ddots&\vdots&\vdots\\
	0&0&0&\cdots&1&0\\
	0&0&0&\cdots&1&1
	\end{array}
	\right)\,,
	\qquad a_{i,j}:=\begin{cases}
	  1\,,& j  = i \quad {\text or} \quad 
	          j\equiv i-1\bmod n\,,\\
	  0\,,& {\text otherwise}\,.
	\end{cases}
	$$
Namely, consider the system
\begin{equation}
\label{cyclic-syst}
\left\{
\begin{array}{ccc}
1-z_1&=& x_1z_1z_n\,,\\
1-z_2&=&x_2z_2z_1\,,\\
\vdots&\vdots&\vdots\\
1-z_n&=& x_{n-1}z_nz_{n-1}\,.
\end{array}
\right.
\end{equation}
Then
$$
D=
\sum_{m\geq 0}
(-1)^{|m|}\binom{m_1+m_2}{m_1}\binom{m_2+m_3}{m_2}\cdots
\binom{m_{n}+m_{1}}{m_n}\,x^m\,. 
$$
\begin{proposition}
\label{cyclic-syst-fmlae}
Let 
$$
u:=z_1\cdots z_n,\qquad \qquad v:=(-1)^nx_1\cdots x_n
$$
and
\begin{equation}
\label{M-defn}
M:=
\left(
\begin{array}{cc}
1&-1\\-x_1&0
\end{array}
\right)
\left(
\begin{array}{cc}
1&-1\\-x_2&0
\end{array}
\right)
\cdots
\left(
\begin{array}{cc}
1&-1\\-x_n&0
\end{array}
\right)\,.
\end{equation}
Then the following statements hold.
\begin{enumerate}
\item[(i)]
$$
\tr(M)=I_n(x_1,\ldots,x_n), \qquad \det(M)=v.
$$
\item[(ii)]
 $$
M\left(\begin{array}{c}z_n\\1\end{array}\right)=uv
\left(\begin{array}{c}z_n\\1\end{array}\right).
$$
\item[(iii)]
The polynomial $X^2-I_n(x_1,\ldots,x_n)X+v$ has 
roots $u^{-1}$ and~$uv$.
\item[(iv)]
	$$
	I_n(x_1,\dots,x_n)=u^{-1}+uv\,
	$$
\item[(v)]
	$$
	D^{-1}=u^{-1}-uv\,
	$$
\item[(vi)]
	$$
	D^{-2}=I_n(x_1,\ldots,x_n)^2-(-1)^n4x_1\cdots x_n\,.
	$$
\item[(vii)]
	$$
	\frac1{\sqrt{I_n(x_1,\ldots,x_n)^2-(-1)^n4x_1\cdots x_n}}=
	\sum_{m\geq 0}
	(-1)^{|m|}\binom{m_1+m_2}{m_1}\binom{m_2+m_3}{m_2}\cdots
	\binom{m_{n}+m_1}{m_n} x^m. 
	$$
\end{enumerate}
\end{proposition}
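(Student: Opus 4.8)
The plan is to treat $M$ from \eqref{M-defn} as a transfer matrix and to prove (i) and (ii) first, since (iii)--(vii) then follow by elementary linear algebra and formal manipulation of power series. Write $M_i:=\left(\begin{smallmatrix}1&-1\\-x_i&0\end{smallmatrix}\right)$, so $M=M_1\cdots M_n$. The determinant in (i) is immediate: $\det M_i=-x_i$, hence $\det(M)=\prod_{i=1}^n(-x_i)=(-1)^nx_1\cdots x_n=v$. For the trace I would expand $\tr(M)=\sum_{s}\prod_{i=1}^n (M_i)_{s_i,s_{i+1}}$ over cyclic state sequences $s\colon\ZZ/n\to\{1,2\}$ with $s_{n+1}=s_1$. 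Because the $(2,2)$ entry of each $M_i$ vanishes, no two consecutive states can both equal $2$, so the set $S:=\{i:s_i=2\}$ runs exactly over the independent sets of $C_n$. For such an $S$ every edge contributes $1$ except the edge entering each $j\in S$ (factor $(M_{j-1})_{1,2}=-1$) and the edge leaving it (factor $(M_j)_{2,1}=-x_j$); since distinct occupied vertices are non-adjacent these two edges are never shared, so the configuration contributes $\prod_{j\in S}(-1)(-x_j)=x^S$, and summing gives $\tr(M)=\sum_S x^S=I_n(x)$.

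Next I would establish (ii) by iterating the Nahm system \eqref{cyclic-syst}. A one-line computation gives $M_i\left(\begin{smallmatrix}z_i\\1\end{smallmatrix}\right)=\left(\begin{smallmatrix}z_i-1\\-x_iz_i\end{smallmatrix}\right)$, and substituting $z_i-1=-x_iz_iz_{i-1}$ from \eqref{cyclic-syst} (with $z_0:=z_n$) rewrites this as $M_i\left(\begin{smallmatrix}z_i\\1\end{smallmatrix}\right)=-x_iz_i\left(\begin{smallmatrix}z_{i-1}\\1\end{smallmatrix}\right)$. Applying the factors of $M$ to $\left(\begin{smallmatrix}z_n\\1\end{smallmatrix}\right)$ from right to left telescopes the index around the cycle and peels off the scalar $\prod_{i=1}^n(-x_iz_i)=(-1)^n(x_1\cdots x_n)(z_1\cdots z_n)=uv$, returning $\left(\begin{smallmatrix}z_n\\1\end{smallmatrix}\right)$; this is exactly (ii).

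With (i) and (ii) the remaining parts are formal. For (iii): by (i) the characteristic polynomial of $M$ is $X^2-\tr(M)X+\det(M)=X^2-I_nX+v$, and (ii) exhibits $uv$ as one eigenvalue, so the other is $\det(M)/(uv)=u^{-1}$. Part (iv) is then the sum-of-roots relation $I_n=u^{-1}+uv$. For (v) I would evaluate \eqref{D-det}: since $\tfrac{1-z_i}{z_i}=x_iz_{i-1}$, the matrix there is cyclic bidiagonal with diagonal entries $1+x_iz_{i-1}=z_i^{-1}$ plus a single corner term, and a cofactor expansion along the first row yields $D^{-1}=\prod_i z_i^{-1}+(-1)^{n+1}\prod_i x_iz_{i-1}=u^{-1}-uv$ (using $\prod_i z_{i-1}=u$ and $(-1)^{n+1}\prod_i x_i=-v$). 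Then (vi) is the identity $I_n^2-D^{-2}=(u^{-1}+uv)^2-(u^{-1}-uv)^2=4v=(-1)^n4x_1\cdots x_n$ from (iv) and (v). Finally (vii) follows because $D$ has constant term $1$, so $D=(D^{-2})^{-1/2}=\bigl(I_n^2-(-1)^n4x_1\cdots x_n\bigr)^{-1/2}$ by (vi), while the power-series formula for $D$ recorded just before the proposition (the specialization of \eqref{D-power-series} to the cyclic $A$, whose column forms are $a_j(m)=m_j+m_{j+1}$) supplies the stated product of binomial coefficients.

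The main obstacle I expect is part (i), specifically the identity $\tr(M)=I_n$: this is the one place where the abstract matrix product is matched against the combinatorics of independent sets, and care is needed to verify that the $-1$ and $-x_j$ contributions of distinct occupied vertices never share an edge (which is precisely where non-adjacency enters), so that the signs cancel to give the positive monomial $x^S$. Everything after (ii) is bookkeeping; the only other point needing attention is getting the sign right in the cofactor expansion used for (v).
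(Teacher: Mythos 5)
Your proof is correct and takes essentially the same route as the paper's: the trace is expanded over state sequences in $\{1,2\}^n$ exactly as in the paper's proof of (i), part (ii) is the same telescoping of the Nahm relations (made slightly more explicit as an eigenvector computation), and (iii)--(vii) follow by the identical eigenvalue, determinant-of-\eqref{D-det}, and discriminant manipulations. No gaps.
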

\begin{proof}
(i) The second identity is immediate.
For the first identity we expand the trace as
	$$
	\tr(M)=\sum_{i_1,\dots,i_n\in\{1,2\}}
	a_{i_1i_2}^{(1)}\dots a_{i_ni_1}^{(n)}
	\,,
	$$
where $a_{ij}^{(k)}$ are the entries of the $k$-th
matrix in the product defining~$M$, and note that if we 
encode $(i_1,\dots,i_n)$ by $I=\{j\in\{1,\dots,n\}\colon i_j=2\}$, 
then the $I$-th term vanishes if $I$ contains two indices 
consecutive modulo $n$ (since $a_{22}^{(k)}=0$) and is equal 
to $\prod_{i\in I}x_i$ otherwise.

(ii) Note that from the Nahm system~\eqref{cyclic-syst}
we have
$$
\left\{
\begin{array}{ccl}
z_{n-1}&=&(1-z_n)/x_nz_n\\
z_{n-2}&=& (1-z_{n-1})/x_{n-1}z_{n-1}\\
\vdots&\vdots&\quad\vdots\\
z_1&=& (1-z_2)/x_2z_2\\
z_n&=& (1-z_1)/x_1z_1\\
\end{array}
\right.
$$
This is equivalent to the claim.

(iii) It follows from (ii) that $uv$ is an eigenvalue of $M$. 
From the determinant value in (i) the other eigenvalue 
is $u^{-1}$. The quadratic polynomial is the characteristic 
polynomial of $M$ by (i).

(iv) Follows immediately from (iii).

(v) By~\eqref{D-det} using the system equations $D^{-1}$ 
is the determinant of the $n\times n$ matrix $W=(w_{i,j})$ 
with $w_{i,j}=z_i^{-1}$ for $j=i$ and $x_iz_i$ for $j\equiv i-1\bmod n$.
Consequently,
	$$
	D^{-1}=\prod_{i=1}^nz_i^{-1}-(-1)^n\prod_{i=1}^nx_iz_i,
	$$
which is what we wanted to prove.

(vi) From (iii) and (v) we see that $D^{-2}$ is the discriminant of the
quadratic polynomial in (iii) and the claim follows.

(vii) This is just a restatement of (vi).
\end{proof}

The expansion (vii) was proved earlier by
Carlitz~\cite{carlitz},~\cite[\S4.4]{riordan}. 

\section{}
\label{diagonal}
As mentioned, we are interested in the coefficients of the power series
expansion of $I_n(x_1,\ldots,x_n)^{-1}$. To obtain these we will
extend the results of the previous section. Let
	$$
	c_{m,j}:=\binom{m_1+m_2}{m_1+j}\binom{m_2+m_3}{m_2+j}\cdots
	\binom{m_{n}+m_{1}}{m_n+j}\,, \qquad |j|\leq \min(m)\,.
	$$
Note that
	$$
	c_{m,j}=\frac{(m_1+m_2)!\cdots (m_{n-1}+m_n)!(m_n+m_1)!}
	{(m_1+j)!(m_1-j)!\cdots (m_n+j)!(m_n-j)!}.
	$$
In particular, $c_{m,-j}=c_{m,j}$. Let
\begin{equation}
\label{R-power-series}
R(z;x_1,\ldots,x_n):=\sum_{m\geq 0}\sum_{|j|\leq
  \min(m)} (-1)^{|m|}c_{m,j} z^jx^m
\end{equation}
be the generating series of these coefficients.
Note that the coefficient of $z^0$ of $R(z;x_1,\ldots,x_n)$ 
is equal to~$D$.

Fix some non-negative integer $j$. The coefficient of $z^j$ in~$R$ 
can be expressed in the form
    $$
    v^j\sum_{m\geq 0}(-1)^{|m|} \binom{m_1+m_2+2j}{m_1}\binom{m_2+m_3+2j}{m_2}\cdots
    \binom{m_{n}+m_{1}+2j}{m_n} x^m
    $$
after replacing $m_i$ by $m_i-j$, where recall that $v=(-1)^nx_1\cdots
x_n$.  By Lagrange inversion~\eqref{z-power-series} we find that this in turn
equals $Dv^j(z_1\cdots z_n)^{2j}.$

To simplify the notation let $w:=v(z_1\cdots z_n)^2$.  From the
coefficients of $R$ in powers of $z$ we can reconstruct the series;
summing the geometric series we find that
$$
R=D\left(1+\frac{wz}{1-wz}+\frac{wz^{-1}}{1-wz^{-1}}\right).
$$
Alternatively,
\begin{equation}
\label{R-fmla1}
R^{-1}=\frac1D
\left(\frac{1+w}{1-w}-\left(z^{\tfrac12}+z^{-\tfrac12}\right)^2\frac 
  w{1-w^2}\right)=\frac1D \frac{(w-z)(w-z^{-1})}{1-w^2}\,.
\end{equation}

\begin{proposition}
The power series $R(z;x_1,\ldots,x_n)$ is the Taylor expansion of a
rational function. More precisely,
\begin{equation}
\label{R-fmla}
R(z;x_1,\ldots,x_n)=\frac{I_n(x_1,\ldots,x_n)}{I_n(x_1,\ldots,x_n)^2
  -(-1)^n\left(z^{\tfrac12}+z^{-\tfrac12}\right)^2
x_1\cdots x_n}.
\end{equation}
\end{proposition}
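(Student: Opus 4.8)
The plan is to finish the algebraic simplification that the preceding discussion has already set up. The combinatorial definition of $R$, Lagrange inversion, and the summation of the resulting geometric series have produced the closed form \eqref{R-fmla1} for $R^{-1}$, so all that remains is to rewrite the right-hand side of \eqref{R-fmla1} in terms of $I_n$, $D$ and $v$ using the structural identities from Proposition~\ref{cyclic-syst-fmlae}. Once this is done, both assertions of the proposition (rationality and the explicit formula) follow at once.

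First I would record that, writing $u=z_1\cdots z_n$, the abbreviation $w=v(z_1\cdots z_n)^2$ equals $w=vu^2=(uv)/u^{-1}$, since $u^{-1}=1/u$. The two quantities $u^{-1}$ and $uv$ are exactly the ones controlled by parts (iv) and (v) of Proposition~\ref{cyclic-syst-fmlae}, namely $u^{-1}+uv=I_n$ and $u^{-1}-uv=D^{-1}$. Dividing numerator and denominator by $u^{-1}$ then gives the clean identities
\begin{equation*}
1+w=\frac{u^{-1}+uv}{u^{-1}}=I_n u,\qquad 1-w=\frac{u^{-1}-uv}{u^{-1}}=D^{-1}u,
\end{equation*}
from which I obtain
$$
\frac{1+w}{1-w}=I_n D,\qquad 1-w^2=D^{-1}I_n u^2,\qquad \frac{w}{1-w^2}=\frac{v}{D^{-1}I_n}=\frac{vD}{I_n},
$$
where in the last step I use $uv\cdot u^{-1}=v$.

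Substituting these into \eqref{R-fmla1}, the overall factor $1/D$ cancels cleanly and yields
$$
R^{-1}=I_n-\left(z^{\tfrac12}+z^{-\tfrac12}\right)^2\frac{v}{I_n}=\frac{I_n^2-v\left(z^{\tfrac12}+z^{-\tfrac12}\right)^2}{I_n}.
$$
Inverting and recalling $v=(-1)^nx_1\cdots x_n$ gives \eqref{R-fmla} exactly; since the right-hand side is manifestly a rational function of $z$ and of the $x_i$, this simultaneously establishes that $R$ is the Taylor expansion of a rational function.

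I do not expect a genuine obstacle here, since the substance of the argument — the passage from the definition of $R$ through Lagrange inversion to the summed geometric series \eqref{R-fmla1} — has already been carried out before the statement. The only points requiring care are bookkeeping ones: checking that the geometric-series rearrangement producing \eqref{R-fmla1} is valid as an identity of formal power series (so that inverting $1-w^2$, which has constant term $1$, is legitimate), and confirming that the matching powers of $u$ and $u^{-1}$ cancel as claimed rather than leaving a spurious factor of $z_1\cdots z_n$ in the final expression.
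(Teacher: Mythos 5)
Your proof is correct and follows essentially the same route as the paper's: both start from \eqref{R-fmla1} and use parts (iv) and (v) of Proposition~\ref{cyclic-syst-fmlae} together with $w=vu^2$ to identify $\tfrac1D\tfrac{1+w}{1-w}=I_n$ and $\tfrac1D\tfrac{w}{1-w^2}=\tfrac{v}{I_n}$. You simply spell out the factorizations $1+w=uI_n$ and $1-w=uD^{-1}$ that the paper leaves implicit.
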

\begin{proof}
Using~\eqref{R-fmla1} it is enough to show that
	$$
	I_n(x_1,\ldots,x_n)=\frac1D\left(\frac{1+w}{1-w}\right)=Dv
	\left(\frac{1-w^2}w\right)  
	$$
and this follows easily from 
Proposition~\ref{cyclic-syst-fmlae} noting that $w=vu^2$.
\end{proof}
\begin{corollary}
\label{I_n-power-series}
The following power series expansion holds
\begin{equation}
I_n(x_1,\ldots,x_n)^{-1}=\sum_{m\geq 0}(-1)^{|m|} \sum_{|j|\leq
  \min(m)}(-1)^j \binom{m_1+m_2}{m_1+j}\binom{m_2+m_3}{m_2+j}\cdots
\binom{m_{n}+m_1}{m_n+j}x^m.
\end{equation}
\end{corollary}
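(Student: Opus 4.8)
The plan is to obtain the claimed expansion by specializing the generating series $R(z;x_1,\ldots,x_n)$ of~\eqref{R-power-series} at $z=-1$. First I would observe that setting $z=-1$ directly in the definition~\eqref{R-power-series} produces exactly the right-hand side of the corollary: for each fixed multi-index $m$ the inner sum over $j$ runs only over $|j|\le\min(m)$ and is therefore finite, so the substitution $z=-1$ is a legitimate coefficient-by-coefficient operation on power series in $x$, and it simply replaces $z^j$ by $(-1)^j$. Thus the corollary is equivalent to the single identity $R(-1;x_1,\ldots,x_n)=I_n(x_1,\ldots,x_n)^{-1}$.

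Next I would evaluate the rational function~\eqref{R-fmla} at $z=-1$. The only apparent difficulty is the half-integer powers of $z$, but these enter solely through the combination $\left(z^{\tfrac12}+z^{-\tfrac12}\right)^2=z+2+z^{-1}$, which is a genuine Laurent polynomial in $z$; hence~\eqref{R-fmla} is truly a rational function of $z$ whose denominator is $I_n^2-(-1)^n(z+2+z^{-1})x_1\cdots x_n$. At $z=-1$ we have $z+2+z^{-1}=0$, so the denominator collapses to $I_n^2$. Since $I_n$ has constant term $1$, so does $I_n^2$, and in particular $I_n^2$ is invertible as a power series. Therefore $R(-1;x)=I_n/I_n^2=I_n^{-1}$, which is exactly what is required.

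The step deserving the most care is the justification that the substitution $z=-1$ is compatible with the two descriptions of $R$. On the series side this is immediate from the finiteness of the $j$-sum noted above; on the rational-function side one must check that $z=-1$ is not a pole, which is precisely the statement that the denominator $I_n^2-(-1)^n(z+2+z^{-1})x_1\cdots x_n$ remains invertible as a power series in $x$ after the substitution, as verified in the previous paragraph. Since~\eqref{R-fmla} is an identity of power series in $x$ whose coefficients are Laurent polynomials in $z$, and these coefficients are being evaluated at a point where no denominator vanishes, the two substitutions agree and the proof is complete.
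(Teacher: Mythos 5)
Your proof is correct and takes exactly the same route as the paper, whose entire argument is the single sentence ``It follows from the proposition by taking $z=-1$.'' Your additional observations --- that the $j$-sum is finite for each $m$ so the substitution is legitimate term by term, and that $\left(z^{\tfrac12}+z^{-\tfrac12}\right)^2=z+2+z^{-1}$ vanishes at $z=-1$ so the denominator of~\eqref{R-fmla} collapses to the invertible series $I_n^2$ --- are precisely the details the paper leaves implicit.
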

\begin{proof}
It follows from the proposition by taking $z=-1$.
\end{proof}

We  are now ready to finish the proof of our main result. 
\begin{proposition}
\label{main-thm-2}
For $n\geq 4$ the power series expansion of $I_n(x)^{-1}$  is not Horn
hypergeometric.  
\end{proposition}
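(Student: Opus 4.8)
The plan is to reduce the multivariate assertion to a one‑variable non‑hypergeometricity statement, which can then be attacked with the classical theory of hypergeometric summation.

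First, recall from the Cartier--Foata discussion in~\S1 that every coefficient $c_m$ in the expansion of $I_n(x)^{-1}$ is a positive integer; in particular no $c_m$ vanishes, so to prove that the series is not Horn hypergeometric I must show that \emph{some} ratio $c_{m+e_i}/c_m$ fails to be a rational function of $m$. Suppose, for contradiction, that all of them are rational. Restricting to the main diagonal, set $b_k:=c_{(k,\dots,k)}$. Passing from $(k,\dots,k)$ to $(k+1,\dots,k+1)$ one coordinate at a time gives
$$
\frac{b_{k+1}}{b_k}=\prod_{i=1}^n\frac{c_{m^{(i)}+e_i}}{c_{m^{(i)}}},\qquad
m^{(i)}:=(\underbrace{k+1,\dots,k+1}_{i-1},k,\dots,k),
$$
a product of the assumed rational functions evaluated at points whose coordinates all lie in $\{k,k+1\}$; since $b_k>0$ the denominators never vanish, and hence $b_{k+1}/b_k$ is a rational function of $k$. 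In other words, the diagonal sequence $b_k$ would itself be hypergeometric.

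Next I would identify $b_k$ explicitly. Specialising Corollary~\ref{I_n-power-series} at $m_i=k$ (so that $m_i+m_{i+1}=2k$ and $m_i+j=k+j$) yields
$$
b_k=\sum_{|j|\le k}(-1)^j\binom{2k}{k+j}^{\,n}.
$$
Thus the whole proposition reduces to the one‑variable claim that, for $n\ge4$, the alternating power sum $b_k=\sum_{j}(-1)^j\binom{2k}{k+j}^n$ is \emph{not} a hypergeometric sequence. The summand $(-1)^j\binom{2k}{k+j}^n$ is hypergeometric in the pair $(k,j)$, so Zeilberger's creative telescoping produces an explicit linear recurrence with polynomial coefficients for $b_k$; the sequence is therefore $D$‑finite. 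To finish, I would apply Petkov\v{s}ek's algorithm to this recurrence and verify that it has no first‑order right factor over $\Q(k)$, i.e. no hypergeometric solution proportional to $b_k$, contradicting the previous paragraph.

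The crux is exactly this last step, the genuine non‑hypergeometricity of $b_k$ for $n\ge4$, and it is also where the threshold $n\ge 4$ enters. For $n\le3$ the sum collapses by classical identities — $\sum_j(-1)^j\binom{2k}{k+j}^2=\binom{2k}{k}$ and Dixon's identity $\sum_j(-1)^j\binom{2k}{k+j}^3=(3k)!/(k!)^3$ — producing honest hypergeometric terms, consistent with $C_2$ and $C_3=K_3$ being chordal; no analogous product formula exists once $n\ge4$. As a numerical sanity check, for $n=4$ one gets $b_0,b_1,b_2,b_3=1,14,786,61340$, whose successive ratios $14,\ 393/7,\ 30670/393$ already resist any low‑degree rational fit. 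A softer, non‑self‑contained way to anticipate the obstruction is asymptotic: using $\binom{2k}{k+j}\approx\binom{2k}{k}e^{-j^2/k}$ and Poisson summation on the alternating Gaussian sum gives $b_k\sim C\,4^{nk}k^{(1-n)/2}e^{-\pi^2 k/(4n)}$, whose exponential rate $4^ne^{-\pi^2/(4n)}$ is not of the shape that a rational term‑ratio can produce. I expect the clean, unconditional argument to be the recurrence‑plus‑Petkov\v{s}ek one, with the asymptotics serving only as motivation (and as the reason the two regimes $n\le3$ and $n\ge4$ genuinely differ).
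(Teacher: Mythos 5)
Your reduction to the main diagonal and your identification of the diagonal coefficients $b_k=\sum_{|j|\le k}(-1)^j\binom{2k}{k+j}^n$ coincide with the paper's proof up to that point (the paper performs the same two steps, citing Corollary~\ref{I_n-power-series}). The gap is in the one step you yourself flag as the crux: you never actually prove that $b_k$ fails to be hypergeometric for $n\ge4$. The Zeilberger--Petkov\v{s}ek route cannot do this as stated, because both algorithms operate on a \emph{fixed} recurrence: for each individual $n$ you would obtain a specific linear recurrence and could, in principle, certify the absence of hypergeometric solutions, but this yields no uniform argument covering all $n\ge4$ at once, and you do not carry out the computation even for $n=4$. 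Moreover, Petkov\v{s}ek's algorithm rules out hypergeometric \emph{solutions} of the recurrence; one still has to connect that to $b_k$ itself, which is routine but not automatic. So the proposal, as written, establishes nothing beyond the reduction.

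Your asymptotic ``motivation'' is in fact the germ of the correct proof, but your constant is only an approximation. De Bruijn's exact result is $b_{k+1}/b_k\to\kappa_n=(2\cos(\pi/2n))^{2n}$; your $4^ne^{-\pi^2/(4n)}$ is the large-$n$ approximation of this algebraic number, and ``not of the shape a rational term-ratio can produce'' is an assertion, not an argument. The paper closes the gap cleanly and uniformly: if $b_k$ were hypergeometric, the limit of $b_{k+1}/b_k$ would have to be a rational number, while $2\cos(\pi/2n)$ is an algebraic integer generating the real cyclotomic field of degree $\varphi(4n)/2$ with all conjugates in $(-2,2)$; rationality of $\kappa_n$ then forces at most two conjugates, hence $\varphi(4n)\le4$ and $n\le3$. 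That irrationality argument (or an equivalent one) is exactly what your write-up is missing.
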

\begin{proof}
Let $c_m$ be the coefficients in the expansion of $I_n(x)^{-1}$
	$$
	I_n(x)^{-1}=\sum_{m\geq 0}(-1)^{|m|} c_m x^m.
	$$
To prove the claim it is enough to show  that if the one variable series
(the main diagonal)
	$$
	H_n(x):=\sum_{k\geq 0}c_kx^k, \qquad \qquad c_k:=c_{(k,\ldots,k)}
	$$
is Horn hypergeometric then $n\leq 3$.

The case $n=1$ being trivial we may  assume $n>1$. By
Corollary~\ref{I_n-power-series} we have 
	$$
	c_k=\sum_{|j|\leq k} (-1)^j\binom{2k}{k+j}^n, \qquad  n >1.
	$$
These numbers are known as de Bruijn numbers in the literature and are
denoted by $S(n,k)$. De Bruijn in his book~\cite{de-bruijn} computed
the asymptotic behaviour of $S(n,k)$ for fixed $n$ and large~$k$. It
follows from his computation that
	$$
	c_{k+1}/c_k\rightarrow  \kappa_n, \qquad k \rightarrow\infty,
	$$
where
	$$
	\kappa_n:=(2\cos(\pi/2n))^{2n}\,.
	$$
We now apply de Bruijn's argument: if $H_k(x)$ is Horn
hypergeometric then $\kappa_n$ has to be rational, and 
hence $n\le 3$. (Here is a short proof of this: 
$2\cos(\pi/2n)$ is an algebraic integer, it generates a real
cyclotomic extension of~$\QQ$ of degree~$\varphi(4n)/2$, and all of 
its $\varphi(4n)/2$ conjugates are real numbers in $(-2,2)$. 
Therefore, if $\kappa_n$ is rational, then there can be at most 
two conjugates, since their absolute values have to be equal, 
and hence $\varphi(4n)\le 4$, thus $n\le 3$.)
\end{proof}

\section{}
\label{misc}
In this section we sketch very briefly several miscellaneous results
stemming from the previous discussion; these will be expanded on in a
later publication.

\medskip
1) We can expand the right hand side of~\eqref{R-fmla} in the variable 
$t:=\frac12\left(z^{\tfrac12}+z^{-\tfrac12}\right)$  and  compare
coefficients   to the left  hand side to obtain some interesting
identities. We will make this  explicit for $n=3$ where the identity
generalizes that of Dixon (corresponding to the appropriate
formulation for  $k=0$).

\begin{proposition}
\label{dixon-gen}
For $k>0$ and $m=(m_1,m_2,m_3)$ a triple of non-negative  integers we
have 
\begin{equation*}
\frac 1{k!}\sum_{j=0}^{\min(m)}(-1)^j \frac{(2j+k)(j+k-1)!}{j!}
\binom{m_1+m_2+k}{m_2+k+j}\binom{m_2+m_3+k}{m_3+k+j}\binom{m_3+m_1+k}{m_1+k+j}
=\frac{(k+m_1+m_2+m_3)!}{k!m_1!m_2!m_3!}\,.
\end{equation*}
\end{proposition}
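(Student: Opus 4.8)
The plan is to read the left-hand side off a generating-function identity in the algebraic functions attached to the cyclic Nahm system~\eqref{cyclic-syst}, and then collapse the whole proposition to a single one-variable binomial identity. Throughout I write $u:=z_1z_2z_3$, $v:=-x_1x_2x_3$ and $w:=vu^2$ as in Proposition~\ref{cyclic-syst-fmlae} and~\S\ref{diagonal}, and I abbreviate
$$
c_{m,j}^{(k)}:=\binom{m_1+m_2+k}{m_2+k+j}\binom{m_2+m_3+k}{m_3+k+j}\binom{m_3+m_1+k}{m_1+k+j}.
$$
The first observation is that the right-hand side $\tfrac{(k+|m|)!}{k!\,m_1!m_2!m_3!}$ is exactly $(-1)^{|m|}$ times the coefficient of $x^m$ in $I_3^{-(k+1)}$, by Corollary~\ref{main-corollary} applied to $\Gamma=K_3$. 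So the proposition is equivalent to the assertion that the generating series $\tfrac1{k!}\sum_{m\ge0}(-1)^{|m|}\big(\sum_{j}(-1)^j\tfrac{(2j+k)(j+k-1)!}{j!}c_{m,j}^{(k)}\big)x^m$ equals $I_3^{-(k+1)}$.

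The key step, which I expect to be the main obstacle, is the identity
$$
\sum_{m\ge0}(-1)^{|m|}c_{m,j}^{(k)}\,x^m=D\,u^k w^j .
$$
This generalizes the fact, recorded in~\S\ref{diagonal}, that the coefficient of $z^j$ in $R$ is $Dw^j$ (the case $k=0$, where $c_{m,j}^{(0)}=c_{m,j}$ by the symmetry $c_{m,-j}=c_{m,j}$). To prove it I would apply the multivariate Lagrange inversion~\eqref{z-power-series} to $u^{k+2j}=z_1^{k+2j}z_2^{k+2j}z_3^{k+2j}$, using that for the cyclic matrix the column linear forms are $a_\ell(m)=m_\ell+m_{\ell+1}$ with indices read modulo $3$; this expresses $Du^{k+2j}$ as $\sum_m(-1)^{|m|}\prod_\ell\binom{k+2j+m_\ell+m_{\ell+1}}{m_\ell}x^m$. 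Since $u^kw^j=v^ju^{k+2j}$, multiplying by $v^j=(-1)^j(x_1x_2x_3)^j$ shifts $m\mapsto m-(j,j,j)$, and after the complementary rewriting $\binom{k+m_\ell+m_{\ell+1}}{m_\ell-j}=\binom{k+m_\ell+m_{\ell+1}}{k+m_{\ell+1}+j}$ one recovers precisely $c_{m,j}^{(k)}$. The delicate part is the bookkeeping of this shift across the three cyclic binomials and checking that all signs collapse to $(-1)^{|m|}$.

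Granting this, the proposition becomes one-dimensional. Summing over $j$ shows that the generating series in question equals $\tfrac1{k!}Du^k\sum_{j\ge0}\tfrac{(2j+k)(j+k-1)!}{j!}(-w)^j$, and Proposition~\ref{cyclic-syst-fmlae} (for $n=3$) gives $I_3=(1+w)/u$ and $D=u/(1-w)$ out of $I_3=u^{-1}+uv$, $D^{-1}=u^{-1}-uv$ and $w=vu^2$. It therefore remains only to prove the scalar identity
$$
\frac1{k!}\sum_{j\ge0}\frac{(2j+k)(j+k-1)!}{j!}(-w)^j=\frac{1-w}{(1+w)^{k+1}} .
$$

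I would prove this last identity by the binomial theorem. Writing $\tfrac{(j+k-1)!}{j!\,(k-1)!}=\binom{j+k-1}{j}$ and setting $y=-w$, one has $f(y):=\sum_{j}\binom{j+k-1}{j}y^j=(1-y)^{-k}$, and incorporating the weight $2j+k$ amounts to forming $2yf'(y)+kf(y)=k(1+y)(1-y)^{-k-1}$; dividing by $k!=k\,(k-1)!$ and substituting $y=-w$ gives the claimed right-hand side. Combining everything, the generating series equals $\tfrac{u}{1-w}\,u^k(1-w)(1+w)^{-k-1}=u^{k+1}(1+w)^{-(k+1)}=I_3^{-(k+1)}$, and comparing coefficients of $x^m$ finishes the proof. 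The hypothesis $k>0$ enters only to make $(j+k-1)!$ meaningful at $j=0$; the degenerate case $k=0$ is Dixon's identity.
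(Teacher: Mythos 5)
Your proof is correct, but it takes a genuinely different route from the paper's. The paper obtains the identity by expanding the rational function $R$ of~\eqref{R-fmla} in the Chebyshev variable $t=\tfrac12\bigl(z^{1/2}+z^{-1/2}\bigr)$, using the explicit expansion of $T_{2j}(t)$ in powers of $2t$; this yields the statement for $k$ even directly (with $k=2l$ coming from the coefficient of $(2t)^{2l}$), and the odd case is then handled by a parallel argument or by polynomial interpolation in $k$. You instead bypass $R$ and the Chebyshev polynomials altogether: your key lemma
$$
\sum_{m\ge0}(-1)^{|m|}c^{(k)}_{m,j}\,x^m \;=\; D\,u^k w^j,
$$
proved by applying Lagrange inversion~\eqref{z-power-series} to $u^{k+2j}$ and shifting $m\mapsto m-(j,j,j)$, is exactly the $k$-deformation of the coefficient extraction already done in \S\ref{diagonal} for $R$, and your sign/index bookkeeping checks out (the shift contributes $(-1)^{|m|-3j}$, which combines with $v^j=(-1)^j(x_1x_2x_3)^j$ to give $(-1)^{|m|}$, and $\binom{k+m_\ell+m_{\ell+1}}{m_\ell-j}=\binom{k+m_\ell+m_{\ell+1}}{k+m_{\ell+1}+j}$ recovers $c^{(k)}_{m,j}$). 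After that the proposition collapses, as you say, to the elementary one-variable identity $\sum_j\binom{j+k-1}{j}(2j+k)y^j=k(1+y)(1-y)^{-k-1}$ together with $I_3=(1+w)/u$ and $D=u/(1-w)$ from Proposition~\ref{cyclic-syst-fmlae}. What your approach buys is uniformity in $k$ (no even/odd split, no interpolation) and a completely explicit reduction to a single binomial-series computation; what the paper's approach buys is that it works for all $n$ at once before specializing to $n=3$, and it exhibits the identity as the natural $t$-expansion of the same generating function $R$ that drives the rest of the paper.
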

\begin{proof}
We  give  a  sketch of the proof and leave the details to the
reader. With the definition of~$t$ 
	$$
	T_{2j}(t)=\tfrac12\left(z^j+z^{-j}\right),
	$$
where $T_{2j}(t)$  is the $(2j)$-th Chebyshev polynomial. We have
	$$
	T_{2j}(t)=j\sum_{l=0}^j(-1)^{l+j}\frac{(j+l-1)!}{(j-l)!(2l)!}(2t)^{2l}\,,
	\qquad j>0\,.
	$$
Expanding the right hand side of~\eqref{R-fmla} in the 
variable~$t$ we find in general for any  $l>0$
	$$
	\frac{v^l}{I_n(x)^{2l+1}}\\
	=\frac1{2(2l)!}\sum_{m\geq 0}(-1)^{|m|}
	\sum_{j=l}^{\min(m)}(-1)^{l+j}\frac{(j+l-1)!}{(j-l)!}  
	\binom{m_1+m_2}{m_1+j}\binom{m_2+m_3}{m_2+j}\cdots 
	\binom{m_{n}+m_1}{m_n+j}\,x^m.
	$$
Specializing to $n=3$, expanding both sides in and comparing
coefficients yields the claim for~$k$ even. 
A similar argument works for~$k$ odd.
Alternatively, since both sides of the identity are polynomial
functions in~$k$, we obtain the case of odd $k$ by interpolation.
\end{proof}
It is curious that the visible $4$-fold symmetry  on the right hand
side is far from  clear on the left hand side. 

\bigskip
2) The Horn-Kapranov parametrization determined by the Horn hypergeometric
series in~\eqref{R-power-series} is the following
\begin{equation}
\label{horn-param}
\left\{
\begin{aligned}
\phi_0&= \frac{(\lambda_1+\lambda_0)\cdots(\lambda_n+\lambda_0)}
{(\lambda_1-\lambda_0)\cdots(\lambda_n-\lambda_0)}\\
\phi_1&= -\frac{(\lambda_1-\lambda_0)(\lambda_1+\lambda_0)}
{(\lambda_1+\lambda_2)(\lambda_1+\lambda_n)}\\
\vdots&\qquad\vdots\\
\phi_n&= -\frac{(\lambda_n-\lambda_0)(\lambda_n+\lambda_0)}
{(\lambda_n+\lambda_1)(\lambda_n+\lambda_{n-1})}
\end{aligned}
\right.
\end{equation}
Since the singularities of the series occur at the points of vanishing 
of the denominator of~$R$
\begin{equation}
\label{Delta-defn}
\Delta(z;x_1,\ldots,x_n):= I_n(x_1,\ldots,x_n)^2
-(-1)^n\left(z^{\tfrac12}+z^{-\tfrac12}\right)^2 x_1\cdots x_n 
\end{equation}
we have $\Delta(\phi_0;\phi_1,\ldots,\phi_n)=0$. It follows that
$$
I_n(\phi_1,\ldots,\phi_n)=
\frac{\prod_{i=1}^n(\lambda_i+\lambda_0) +
\prod_{i=1}^n(\lambda_i-\lambda_0)}
{\prod_{i=1}^n(\lambda_i+\lambda_{i+1})}\,, 
$$
where the indices are read modulo $n$. This identity
follows from part~(iv) of Proposition~\ref{cyclic-syst-fmlae},
since for $x_i=\phi_i$ we can solve the Nahm system explicitly by 
taking~$z_i=\frac{\lambda_i+\lambda_{i+1}}{\lambda_{i+1}+\lambda_{0}}$.

If we set  $\lambda_0=0$ and $u_i:=\lambda_{i+1}/\lambda_i$ then
\begin{equation}
\label{u-param}
\left\{
\begin{aligned}
\phi_0&= 1\\
\phi_1&= -1/(1+u_1)(1+u_n^{-1})\\
\vdots&\qquad\vdots\\
\phi_n&= -1/(1+u_n)(1+u_{n-1}^{-1})\\
\end{aligned}
\right.
\end{equation}
If we relax the condition  that $u_1\cdots  u_n=1$ that is  a
consequence of their definition and treat them as independent variables
then plugging in the rational map~\eqref{u-param} we obtain
\begin{equation}
\label{cyclic-u-fmla}
I_n\left(-\frac1{(1+u_1)(1+u_n^{-1})},\ldots,
  -\frac1{(1+u_n)(1+u_{n-1}^{-1})}\right)
= \frac{1+u_1\ldots u_n}
{\prod_{i=1}^n(1+u_i)}\,.
\end{equation}
Again, this identity follows from part~(iv) 
of Proposition~\ref{cyclic-syst-fmlae} by taking~$z_i=1+u_i^{-1}$.
Writing this  identity explicitly for $n=2$ and $n=3$ we find
    $$
    1-\frac1{(1+u_1)(1+u_2^{-1})}-\frac1{(1+u_2)(1+u_1^{-1})}=\frac{1+u_1u_2}
         {(1+u_1)(1+u_2)}
    $$
and
    $$
    1-\frac1{(1+u_1)(1+u_3^{-1})}-\frac1{(1+u_2)(1+u_1^{-1})} -
        \frac1{(1+u_3)(1+u_2^{-1})} =\frac{1+u_1u_2u_3}
         {(1+u_1)(1+u_2)(1+u_3)}\,.
    $$
The naive analogue of this identity does not hold for more than three
variables as the  left hand side no longer is the specialization of~$I_n$.

\bigskip 
3) The varieties defined by the vanishing of
$\Delta(z;x_1,\ldots,x_n)$ seem to be quite interesting. Here we
discuss a few cases in the special case of $z=1$ for small $n$, where
the varieties  are classical. Let  
    $$
    \Delta(x_1,\ldots,x_n):=\Delta(1;x_1,\ldots,x_n)=
    I_n(x_1,\ldots,x_n)^2-(-1)^n4 x_1\cdots x_n\,.
    $$
It is a  polynomial of degree $n$. Let $\Delta_n(x_0,x_1,\ldots,x_n)$ be
the homogenization of  $\Delta$ and $X_n\subseteq\P^n$  its zero
locus.  

For $n=2$ we have
    $$
    \Delta_2(x_0,x_1,x_2)=x_0^2+x_1^2+x_2^2 + 2x_0x_1 +2x_0x_2  -2x_1x_2
    $$
and $X_2\subseteq \P^2$ is a smooth conic.

For $n=3$ we have
    $$
    \Delta_3(x_0,x_1,x_2,x_3)=x_0(x_0+x_1+x_2+x_3)^2+4x_1x_2x_3\,.
    $$
We  find that $X_3\subseteq \P^3$ is a cubic surface with the
four double points
$$
(-1 : 1 : 1 : 1),\quad (-1 : 0 : 0 : 1),\quad (-1 : 0 : 1 : 0),\quad
(-1 : 1 : 0 : 0).
$$
It follows that $X_3$ is projectively isomorphic to the Cayley
surface~\cite[p.~500]{dolgachev},~\cite[p.~75]{hunt}.

For $n=4$ we find that $X_4\subseteq \P^4$ is a quartic threefold
non-singular except for $15$ lines. These lines meet in appropriate
groups of three lines at $15$ points. The resulting configuration is
known as the Cremona-Richmond configuration~\cite[\S 9]{coxeter}.  The
variety $X_4$ is isomorphic to the Castelnuovo-Richmond
quartic~\cite[p.~532]{dolgachev} (also known as the Igusa
quartic~\cite[\S 3.3]{hunt}).

\bigskip
4) Let us analyze the Nahm system~\eqref{cyclic-syst} 
a bit more closely. We have the following.
\begin{proposition}
\label{quadr-ext}
\begin{enumerate}
\item[(i)]
Let $n>1$. The Nahm system~\eqref{cyclic-syst} has the following
solution in~$K:=F\left(\sqrt{\Delta}\right)$, where
$F:=\Q(x_1,\ldots,x_n)$ and   
	$$
	\Delta(x_1,\ldots,x_n):= I_n(x_1,\ldots,x_n)^2-(-1)^n4 x_1\cdots x_n.
	$$
For $i=1,\ldots,n$
\begin{equation}
\label{z-fmla}
z_i=\frac{-b_i+\sqrt{\Delta}}{2a_i},\qquad
a_i:=x_{i}F_n(x_{i+2},x_{i+3},\ldots,x_{i-1}), \quad
b_i:=I_n(x_1,\ldots,-x_{i},\ldots,x_n),
\end{equation}
where $F_n$ is the Fibonacci polynomial~\eqref{fibonacci-defn} in
$n-2$  variables.
\item[(ii)]
For $i=1,\ldots, n$ let  $c_i:=-F_n(x_{i},x_{i+1},\ldots,x_{i-3})$ then
	$$
	\Delta=b_i^2-4a_ic_i.
	$$
\end{enumerate}
\end{proposition}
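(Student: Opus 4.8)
The plan is to view (i) and (ii) as two aspects of a single quadratic: I will show that each coordinate $z_i$ of the solution generates the quadratic extension $K=F(\sqrt{\Delta})$ of $F=\Q(x_1,\ldots,x_n)$ and is given by the stated root. I would prove (ii) first, since it identifies the discriminant, and then read off (i).

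For part (ii) I would avoid a brute-force expansion and instead organize everything through the deletion recursion for the independence polynomial. Splitting the independent sets of $C_n$ according to whether the vertex $i$ is used gives $I_n=S_i+T_i$, where (indices mod $n$) $S_i=I_{C_n\smsm\{i\}}$ and $T_i=x_i\,I_{C_n\smsm\{i-1,i,i+1\}}$. Only $T_i$ carries the factor $x_i$, so the substitution $x_i\mapsto -x_i$ defining $b_i$ gives $b_i=S_i-T_i$, whence $I_n^2-b_i^2=4S_iT_i$ and
\[
\Delta-b_i^2=4S_iT_i-(-1)^n4\,x_1\cdots x_n .
\]
Since $a_i=x_i\,I_{C_n\smsm\{i,i+1\}}$ and $c_i=-\,I_{C_n\smsm\{i-1,i\}}$, the desired identity $\Delta=b_i^2-4a_ic_i$ is equivalent, after cancelling the common factor $x_i$, to
\[
I_{C_n\smsm\{i\}}\,I_{C_n\smsm\{i-1,i,i+1\}}-I_{C_n\smsm\{i,i+1\}}\,I_{C_n\smsm\{i-1,i\}}=(-1)^n\!\!\prod_{j\ne i}x_j .
\]
This is exactly a continuant (Desnanot--Jacobi) identity for the path $C_n\smsm\{i\}$, whose two endpoints are $i+1$ and $i-1$: the product of the full path with its doubly-truncated subpath minus the product of its two singly-truncated subpaths. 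I would prove it by induction on $n$ using the path recursion $I_{P_m}=I_{P_{m-1}}+x_mI_{P_{m-2}}$; pinning down the sign $(-1)^n$ and the cyclic labels is the step I expect to need the most care.

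For part (i) I would manufacture the quadratic satisfied by $z_i$ from the transfer matrix of Proposition~\ref{cyclic-syst-fmlae}. Writing $M=L_1\cdots L_n$ for the factorization in~\eqref{M-defn}, part (ii) of that proposition says $(z_n,1)^\top$ is an eigenvector of $M$ for the eigenvalue $uv$; eliminating $uv$ between the two rows of the eigenvector equation gives a quadratic in $z_n$ whose coefficients are entries of $M$, and applying the same argument to the cyclic rotations of the product produces, for each $i$, a quadratic satisfied by $z_i$. Refining the combinatorial expansion already used to prove $\tr(M)=I_n$, the relevant entries are the independence polynomials of the subpaths of $C_n$ cut open at $i$, so the quadratic is precisely $a_iz^2+b_iz+c_i=0$. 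By part (ii) its discriminant is $\Delta$, hence $z_i=\frac{-b_i\pm\sqrt{\Delta}}{2a_i}\in K$. The sign is forced by the normalization $z_i=1$ at $x=0$: there $a_i$ and $-b_i+\sqrt{\Delta}$ both vanish while $c_i\to -1$, so the companion root $c_i/(a_iz_i)$ has a pole and only the $+$ root stays finite, giving the stated formula.

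The main obstacle is the continuant identity underlying part (ii), together with the bookkeeping needed to match the entries of the cyclic transfer matrix to $a_i,b_i,c_i$ in the correct cyclic order. Once the four subpaths are correctly identified and the continuant identity is in hand, both parts drop out of the single deletion recursion for independence polynomials and the eigenvector relation already established in Proposition~\ref{cyclic-syst-fmlae}.
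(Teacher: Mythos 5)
The paper offers no proof of this proposition --- it sits in \S\ref{misc} among results ``sketched very briefly'' --- so there is nothing to compare your argument against; judged on its own it is correct and completable. Both halves work as you describe: the vertex decomposition $I_n=S_i+T_i$ gives $b_i=S_i-T_i$ and reduces (ii) to the continuant identity for the path $C_n\smsm\{i\}$, which your induction $f(P_m)=-x_m\,f(P_{m-1})$, $f(P_2)=-x_1x_2$, settles with the sign $(-1)^{m+1}=(-1)^n$; and the eigenvector relation of Proposition~\ref{cyclic-syst-fmlae}(ii), applied to the cyclic rotations of the product, yields the quadratic whose coefficients are the independence polynomials of the four subpaths, after which the normalization $z_i=1$ at $x=0$ forces the $+$ root exactly as you say. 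One simplification: you do not need the continuant induction at all. Writing the rotated product as $\psmm{\alpha}{\beta}{\gamma}{\delta}$, the eigenvector relation gives $\gamma z^2+(\delta-\alpha)z-\beta=0$, whose discriminant is $(\delta-\alpha)^2+4\beta\gamma=\tr(M)^2-4\det(M)=I_n^2-(-1)^n4x_1\cdots x_n=\Delta$ directly from Proposition~\ref{cyclic-syst-fmlae}(i); your continuant identity is precisely $\det M=v$ read off the entries, so (ii) comes for free once the entries are identified, and (i) and (ii) collapse into one computation.

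The cyclic bookkeeping you flagged is indeed the only real friction, and you should be warned that when it is done the labels will not match the statement as printed. With the system $1-z_i=x_iz_iz_{i-1}$ (note the last displayed line of~\eqref{cyclic-syst} already carries a typo, $x_{n-1}$ for $x_n$), the entries of $M=L_1\cdots L_n$ are $\alpha=I_{C_n\smsm\{1\}}$, $\delta=x_1I_{C_n\smsm\{n,1,2\}}$, $\beta=-I_{C_n\smsm\{n,1\}}$, $\gamma=-x_1I_{C_n\smsm\{1,2\}}$, so the quadratic with coefficients $a_1=x_1I_{C_n\smsm\{1,2\}}$, $b_1=I_n(-x_1,x_2,\dots,x_n)$, $c_1=-I_{C_n\smsm\{n,1\}}$ is satisfied by $z_n$, i.e.\ the triple indexed by $i$ goes with $z_{i-1}$, not $z_i$; moreover your $c_i=-I_{C_n\smsm\{i-1,i\}}$ (which is the one that makes (ii) true) is one cyclic step off from the printed $-F_n(x_i,\dots,x_{i-3})=-I_{C_n\smsm\{i-2,i-1\}}$. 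The case $n=2$ with $(x_1,x_2)=(1,0)$, where $z_1=\tfrac12$ while the printed formula returns $1$, confirms both discrepancies in one line. So: prove it your way, state explicitly which rotation of the product pairs with which $z_i$, and expect to correct the proposition's indices rather than reproduce them.
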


We see that $z_1,\ldots,z_n$ are rational functions on the double
cover~$Z_n$ of $\P^n$  ramified at~$X_n$. These double covers
are also classical varieties: $Z_3$ is Segre's
primal~\cite[p. 530]{dolgachev},~\cite[\S 3.2]{hunt} and $Z_4$
is Coble's variety~\cite[\S 3.5]{hunt}.

\bigskip
 5) There is a connection between the varieties $X_n$
of 4) and wild character varieties~\cite[\S 5]{boalch}. Consider the
following matrix in invertible variables $y_1,\ldots,y_n$
\begin{equation}
\label{Y-defn}
Y:=
\left(
\begin{array}{cc}
y_1^{-1}&1\\1&0
\end{array}
\right)
\left(
\begin{array}{cc}
y_2^{-1}&1\\1&0
\end{array}
\right)
\cdots
\left(
\begin{array}{cc}
y_n^{-1}&1\\1&0
\end{array}
\right)\,.
\end{equation}
If we insert in between each pair of factors the diagonal matrices
$$
\left(\begin{array}{cc}
-y_i&0\\0&1
\end{array}
\right)
\left(\begin{array}{cc}
1&0\\0&-y_i
\end{array}
\right),
\qquad i=1,\ldots,n
$$
using that 
$$
\left(\begin{array}{cc}
1&0\\0&-x
\end{array}
\right)
\left(\begin{array}{cc}
y^{-1}&1\\1&0
\end{array}
\right)
\left(\begin{array}{cc}
-y&0\\0&1
\end{array}
\right)=
\left(\begin{array}{cc}
-1&1\\xy&0
\end{array}
\right)
$$
we obtain
\begin{equation}
\label{Y-M}
\left(\begin{array}{cc}
1&0\\0&-y_n
\end{array}
\right)
Y
\left(\begin{array}{cc}
1&0\\0&-y_n
\end{array}
\right)^{-1}=
\left(
\begin{array}{cc}
y^{-1}&0\\0&y^{-1}
\end{array}
\right)
\left(
\begin{array}{cc}
-1&1\\y_1y_n&0
\end{array}
\right)
\left(
\begin{array}{cc}
-1&1\\y_2y_1&0
\end{array}
\right)
\cdots
\left(
\begin{array}{cc}
-1&1\\y_ny_{n-1}&0
\end{array}
\right)\,,
\end{equation}
where $y:=(-1)^ny_1\cdots y_n$. 

Assume now that $n=2k$ is even. We may insert
$P=\left(\begin{array}{cc}
          0&1\\1&0\end{array}\right)$ 
appropriately in the definition of $Y$~\eqref{Y-defn} and find that 
\begin{equation}
\label{Y-wild}
Y:=
\left(
\begin{array}{cc}
1&y_1^{-1}\\0&1
\end{array}
\right)
\left(
\begin{array}{cc}
1&0\\y_2^{-1}&1
\end{array}
\right)
\cdots
\left(
\begin{array}{cc}
1&y_{n-1}^{-1}\\0&1
\end{array}
\right)
\left(
\begin{array}{cc}
1&0\\y_n^{-1}&1
\end{array}
\right)\,.
\end{equation}
It follows by~\eqref{Y-M} that the matrices $Y$
and $M$
of~\eqref{M-defn} are related and hence by~\eqref{Y-wild} $M$
is related to the equations involved in the definition of certain wild
character varieties (loc.cit.). The difference is that we impose the
condition that the characteristic polynomial of $M$
has a double root instead of prescribing its entries. This should
correspond to taking the Zariski closure of the $2\times 2$
Jordan block instead of a torus element as the target of the moment
map.


\begin{thebibliography}{10}
\bibitem{abramov-petkovsek} {\sc S.~A.~Abramov and M.~Petkov\v{s}ek}
  {\em Dimensions of solution spaces of H-systems}, J. Symbolic
  Comput.~{\bf 43} (2008), 377--394

\bibitem{barnikov}
{\sc A.~Barvinok} {\em Combinatorics and complexity of partition
functions}.  Algorithms and Combinatorics,~{\bf 30}. Springer, Cham,
2016    

\bibitem{boalch}
{\sc Ph.~Boalch}
{\em Wild character varieties, points on the Riemann sphere and
  Calabi's examples}, Representation theory, special functions
and Painlev\'e equations-RIMS 2015, 67--94, Adv. Stud. Pure Math., {\bf 76},
Math. Soc. Japan, Tokyo, 2018  

\bibitem{carlitz}
{\sc  L.~Carlitz}
{\em A binomial identity arising from a sorting problem},
SIAM Rev. {\bf 6} (1964),~20--30

\bibitem{cartier-foata}
{\sc P.~Cartier and D.~Foata}
{\em Probl\`emes combinatoires de commutation et r\'earrangements}.
Lecture Notes in Mathematics, {\bf 85} Springer-Verlag, Berlin-New York, 1969

\bibitem{coxeter}
{\sc H.~S.~M.~Coxeter}
{\it Self-dual configurations and regular graphs},
Bull. Amer. Math. Soc. {\bf 56} (1950),~413--455

\bibitem{de-bruijn}
{\sc N.~G.~de Bruijn} {\em  Asymptotic methods in analysis}. Dover
Publications, Inc., New York, 1981 

\bibitem{dolgachev} 
{\sc I.~V.~Dolgachev}
{\em Classical algebraic geometry. A modern view}. Cambridge University Press, Cambridge, 2012

\bibitem{fulkerson-gross}
{\sc D.~Fulkerson and O.~A.~Gross} {\em Incidence matrices and
interval graphs}, Pacific J. Math., {\bf 15} (1965),~835--855

\bibitem{golumbic} {\sc M.~Ch.~Golumbic} {\em Algorithmic graph theory and perfect graphs}. Second edition. With a foreword by Claude Berge. Annals of
Discrete Mathematics, {\bf 57}, Elsevier Science B.V., Amsterdam, 2004 

\bibitem{hunt}
{\sc B.~Hunt} {\it The geometry of some special arithmetic
  quotients}. Lecture Notes in Mathematics, {\bf 1637}, Springer-Verlag,
  Berlin, 1996 

\bibitem{nahm}
{\sc W.~Nahm} {\em Conformal Field Theory and Torsion Elements of the Bloch
Group}, in Frontiers in Number Theory, Physics and Geometry II,
Springer, 2007, 67--132 

\bibitem{riordan}
{\sc J.~Riordan}
{\em Combinatorial identities}. John Wiley \& Sons, Inc., New
York-London-Sydney, 1968 

\bibitem{RRV}
{\sc D.~Radchenko and F.~Rodriguez Villegas} {\em Goursat rigid local systems of rank four}, Res. Math. Sci. {\bf 5:38} (2018), in the collection: Modular Forms are Everywhere: Celebration of Don Zagier's 65th Birthday 

\bibitem{frv-non-orientable}
{\sc F.~Rodriguez Villegas and E.~Letellier} {\em Character Varieties of
Non-orientable Surfaces} (in preparation)

\bibitem{frv-A-pol}
{\sc F.~Rodriguez Villegas}
{\em A refinement of the $A$-polynomial of quivers}
{\tt   arXiv:1102.5308v1}

\bibitem{sokal}
{\sc A.~Scott and A.~Sokal}
{\em The repulsive lattice gas, the independent-set polynomial, and the
Lovász local lemma}, J. Stat. Phys. 118 (2005), no. 5-6, 1151--1261

\bibitem{zagier-dilog}
{\sc D.~Zagier} {\em The dilogarithm function}, in Frontiers in Number Theory, Physics and Geometry II, Springer, 2007, 3--65
\end{thebibliography}
\end{document}